\documentclass[12pt,reqno]{amsart}
\usepackage[usenames]{color}
\usepackage{amsmath,verbatim,graphicx,epstopdf,enumerate}
%\pretolerance=4000
%\usepackage{geometry}
\setlength{\topmargin}{-.25in}
\setlength{\textheight}{9in}
\setlength{\textwidth}{7in}
\setlength{\headheight}{26pt}
\setlength{\headsep}{20pt}
\setlength{\oddsidemargin}{-0.25in}
\setlength{\evensidemargin}{-0.25in}
\newcommand{\I}{\mathrm{i}}
\newcommand{\D}{\mathrm{d}}

\newcommand{\lb}{\left(}

\newcommand{\vp}{\varphi}
\newcommand{\ve}{\varepsilon}

\newcommand{\rb}{\right)}
\newcommand{\PD}{\partial}

\newcommand{\Bc}{\mathcal{B}}

\newcommand{\Dc}{\mathcal{D}}

\newcommand{\Fc}{\mathcal{F}}

\newcommand{\Lc}{\mathcal{L}}

\newcommand{\Rb}{\mathbb{R}}

\newcommand{\Sb}{\mathbb{S}}

\newcommand{\Lbf}{\textbf{L}^2}
\newcommand{\Hbf}{\textbf{H}}

\newcommand{\Beq}{\begin{equation}}
\newcommand{\Eeq}{\end{equation}}
\newcommand{\beq}{\begin{equation*}}
\newcommand{\eeq}{\end{equation*}}
\newcommand{\bal}{\begin{align}}
\newcommand{\eal}{\end{align}}
\renewcommand{\O}{\Omega}

\newcommand{\n}{\nabla}

\newcommand{\bp}{\begin{prob}}
	\newcommand{\ep}{\end{prob}}
\newcommand{\bpr}{\begin{proof}}
	\newcommand{\epr}{\end{proof}}
\renewcommand{\o}{\omega}

%Todd's additions

\newcommand{\bel}[1]{\begin{equation}\label{#1}}
\newcommand{\ee}{\end{equation}}

\newtheorem{theorem}{Theorem}[section]

\newtheorem{lemma}[theorem]{Lemma}
\newtheorem{proposition}[theorem]{Proposition}

\theoremstyle{definition}

\title[Partial data inverse problem for wave equation]{Determining the time dependent matrix potential in a wave equation from partial boundary data}
\author[Mishra and Vashisth]{Rohit Kumar Mishra$^\dagger$ and Manmohan Vashisth$^{\ast}$}

\address{$^\dagger$ Department of Mathematics, University of Texas at Arlington, Texas,  United States
	\newline
	\indent E-mail:{\tt \ rohit.mishra@uta.edu}; \tt \ rohittifr2011@gmail.com}
\address{$^\ast$ Beijing Computational Science Research Center, Beijing 100193, China.
	\newline
	\indent E-mail:{\tt\ mvashisth@csrc.ac.cn; \tt\  manmohanvashisth@gmail.com}}
\begin{document}
		\maketitle
	\begin{abstract}
		 		We study the inverse problem for determining the time-dependent matrix potential appearing in the wave equation. We prove the unique determination of  potential from the knowledge of solution measured on a part of the boundary.
	\end{abstract}
 \textbf{Keywords :} Inverse problems, wave equation, Carleman estimates, partial boundary data, time-dependent coefficient\\

	\textbf{ Mathematics Subject Classifications (2010):} 35L05, 35L20, 35R30

	%%%%%%%%%%%%%%%%%%%%%%%%%%%%%%%%%%%%%%%%%%%%%%%%%%%%%%%%%%%%%%%%%%%%%%%%
	
	\section{Introduction}\label{Introduction}
	Let $\Omega\subset\Rb^{n}$ for $n\geq 2$ be a bounded open set with $C^{2}$ boundary $\PD\Omega$. For $T>0$, let  $Q:=(0,T)\times\Omega$ and we denote its lateral boundary by $\Sigma:=(0,T)\times\PD\Omega$. Throughout this article,  $\textbf{H}^s(X)$ will denote the space of vector valued functions defined on $X$ with each of its component belongs to  $H^s(X)$. Similar notations will be used for other vector valued function spaces as well such as $\textbf{C}^k(X)$, $\textbf{L}^2(X)$ etc. Let $q(t,x):=\left(q_{ij}(t,x)\right)_{1\leq i,j\leq n}$, is a time-dependent matrix valued potential with each $q_{ij}\in W^{1,\infty}(Q)$ and we write this as $q\in W^{1,\infty}(Q)$. 
		%Throughout this article,  we use the notation $\textbf{H}^s(X)$ and $\textbf{C}^{k}(X)$ for the $\lb H^s(X)\rb^n$ and $\lb C^{k}(X)\rb^{n}$ respectively for any $H^s$ and $C^{k}$ space defined on $X$. In particular, $\textbf{H}^0(X) = \textbf{L}^2(X)  = \lb L^2(X)\rb^n$ and \textbf{C}^{0}(X)=\textbf{C}(X)=\lb C(x)\rb^{n}$.
 For a displacement vector $\vec{u}(t,x):=\left(u_{1}(t,x),u_{2}(t,x),\cdots, u_{n}(t,x)\right)^{T}$  and a matrix valued potential  $q(t,x)$, we denote by  $\Lc_{q}$ the  following   operator
	\begin{align}\label{definition of operator}
	\Lc_{q}\vec{u}(t,x):=
	\begin{bmatrix}\vspace*{2mm}
	\Box u_{1}(t,x) +\sum_{j=1}^{n}q_{1j}(t,x)u_{j}(t,x)\\
	\Box u_{2}(t,x) +\sum_{j=1}^{n}q_{2j}(t,x)u_{j}(t,x)\\
	\vdots\\
	\Box u_{n}(t,x)+\sum_{j=1}^{n}q_{nj}(t,x)u_{j}(t,x)\\
	\end{bmatrix}
	,\  \ (t,x)\in Q
	\end{align}
	where $\Box:=\partial_{t}^{2}-\Delta_{x}$, denotes the wave operator. 
Now we consider the following initial boundary value problem:
	 	\begin{align}\label{Equation of interest}
	 \begin{cases}
	 &\Lc_{q}\vec{u}(t,x)=\vec{0},\ (t,x)\in Q\\
	&\vec{u}(0,x)=\vec{\phi},\ \partial_{t}\vec{u}(0,x)=\vec{\psi}(x), \ x\in\Omega\\
	 &\vec{u}(t,x)=\vec{f}(t,x),\ (t,x)\in \Sigma.
	 \end{cases}
	 \end{align} 
Using Theorem \ref{Exitence uniqueness theorem} in \S \ref{Wellposedness}, if for $q\in L^{\infty}(Q)$, $\vec{\phi}\in \Hbf^{1}(Q)$, $\vec{\psi}\in \textbf{L}^{2}(\Omega)$ and $\vec{f}\in \textbf{H}^{1}(\Sigma)$ is such that $\vec{f}(0,x)=\vec{\phi}(x)$ for $x\in\PD\Omega$, then there exists a unique solution $\vec{u}$ of \eqref{Equation of interest} satisfying the following
	 \begin{align*}
	 \begin{aligned}
	 \vec{u}\in \textbf{C}^{1}\lb [0,T];\textbf{L}^{2}(\Omega)\rb\cap \textbf{C}\lb [0,T];\textbf{H}^{1}(\Omega)\rb  \mbox{and}\  \PD_{\nu}\vec{u}\in \textbf{L}^{2}(\Sigma),
	 \end{aligned}
	 \end{align*}
	 where $\PD_{\nu}\vec{u}$ represents the component-wise  normal derivative of vector $\vec{u}$, that is $\PD_{\nu}\vec{u}:=  (\PD_{\nu} u_1, \cdots, \PD_{\nu}u_n)$.
	 
	 Based on this we define the continuous linear input-output operator $\Lambda_{q}:\Hbf^{1}(\Omega)\times\textbf{L}^{2}(\Omega)\times\textbf{H}^{1}(\Sigma)\rightarrow \textbf{H}^{1}(\Omega)\times\textbf{L}^{2}(\Sigma)$ by
	 \begin{align}\label{Definition of input-output operator}
	 \Lambda_{q}\lb \vec{\phi},\vec{\psi},\vec{f}\rb:=\Big( \vec{u}(T,\cdot),\PD_{\nu}\vec{u}|_{\Sigma}\Big).
	 \end{align}  
In this paper we consider the inverse problem of determining time-dependent potential $q$ from the knowledge of input-output operator $\Lambda_{q}$ measured on a subset of $\PD Q$. Our goal is to  prove a uniqueness result for determining $q$ from the partial information of $\Lambda_{q}$ measured  on $\PD Q$ (see Theorem \ref{Main Theorem} below in \S \ref{Main result statement} for more details). 
	  
	  Uniqueness issues for determining the coefficients in hyperbolic inverse problems are of great interest in last few decades. There have been extensive works in the literature regarding the identification of coefficients from boundary measurements involving the single wave equation while concerning the coefficients identification problems for the system of hyperbolic equations, not many results are available in literature. To the best of our knowledge the problem of determining the time-independent matrix potential  appearing in a one dimensional wave equation from boundary measurement  is first studied in \cite{Avdonin_Belishev_Imanov} and recently this result has been extended  in \cite{Khanfer_Bukhgeim} to the determination of matrix valued potential using finite number of boundary measurements. Following the ideas used in \cite{Blagoveshchenskii}, authors in \cite{Avdonin_Belishev_Imanov} showed that the time-independent matrix potential can be recovered from the boundary measurements.   
	  Eskin and Ralston in \cite{Eskin_Ralston_Yang_Mills} studied the problem of determining the first order as well as zeroth order time independent matrix valued perturbations in hyperbolic equations and proved the uniqueness  up to  a gauge invariance (see \cite{Eskin_Ralston}) from the full boundary measurements. The gauge invariance appears only because of first (or higher) order perturbations and hence in the present work there will be no gauge invariance since we are only considering the zeroth order perturbation. Hence one can hope to recover the matrix potential $q$ uniquely for the above system of equation \eqref{Equation of interest} from the boundary measurements and this is the question we study in the current article.
	   Next we mention the works related to the single wave equation which are closely related to the problem we study in this article. Unique determination for time independent scalar potential from boundary data appearing in \eqref{Equation of interest} is initially  studied by  Bukhge\u{i}m and Klibanov in \cite{Bukhgeuim_Klibanov_Uniqueness_1981} (see also \cite{Rakesh_Symes_Uniqueness_1988}). In \cite{Rakesh_Symes_Uniqueness_1988} uniqueness was proved using the geometric optics solutions inspired by the work of Sylvester  and Uhlmann \cite{Sylvester_Uhlmann_Calderon_problem_1987} for elliptic problem. Rakesh and Ramm  in \cite{Ramm_Sjostrand_IP_wave_equation_potential_1991} considered the  unique determination of  time-dependent scalar potential and they proved that the potential can be determined uniquely in some subset of $Q$ from the knowledge of the  Dirichlet to Neumann map measured on $\Sigma$. In \cite{Ramm_Sjostrand_IP_wave_equation_potential_1991} the wave equation with time-dependent potential in $\Rb\times\Omega$ is considered and they proved the uniqueness result for determining the coefficient from the  Dirichlet to Neumann map measured on $\Rb\times\PD\Omega$. For finite time domain $Q$ the problem for determining the time-dependent potential was studied by  \cite{Isakov_Completeness_Product_solutions_IP_1991} where uniqueness result was proved using  informations of the solutions at initial and final time in addition to the Dirichlet to Neumann map. Recently Kian in \cite{Kian} proved that the uniqueness considered in \cite{Isakov_Completeness_Product_solutions_IP_1991} can be shown using the less information than that of \cite{Isakov_Completeness_Product_solutions_IP_1991}. Using the Carleman estimate together with geometric optic solutions Kian in \cite{Kian}  established the uniqueness for scalar time dependent potential using the informations of solution measured on a suitable subset of $\PD Q$. For anisotropic  wave equation the  unique determination for the time-dependent scalar potential from partial boundary data has been considered in \cite{Kian_Oksanen_anisotropic_wave_potential}. For more works related to the determination of coefficients appearing in the single wave equation from boundary measurements, we refer to \cite{Anikonov_Cheng_Yamamoto,Belishev_BC_Method_2011,Bellassoued_Jellali_Yamamoto_Lipschitz_stability_hyperbolic,Bellassoued_Yamamoto,Bellassoued_Jellali_Yamamoto_stability_hyperbolic,Ibtissem_Stability_potential,Katchalov_Kurylev_Lassas_Book_2001,Kian_Stability_potential_partial_data,Kian,Stefanov-Yang,Stefanov_Inverse_scattering_potential_time_dependent_1989} and references therein.

	In this paper we consider the unique determination of time-dependent matrix valued potential $q(t,x)$  appearing in \eqref{Equation of interest} from the partial boundary data.  Our work can be seen as an extension of the work of \cite{Kian} who considered the aforementioned problem for determining the scalar time-dependent potential $q$ appearing in \eqref{Equation of interest}.

	The paper is organized as follows. In \S \ref{Wellposedness} we prove the well-posedness of the forward problem for Equation \eqref{Equation of interest}. In \S \ref{Main result statement}, we state the main result of the article. \S \ref{Carleman Estimate} is devoted to derive the Carleman estimates which will be used to prove the existence of geometric optics (GO) solutions and in \S \ref{GO solutions}, we construct the required GO solutions. Finally in  \S \ref{proof main result},  we prove the main theorem \ref{Main Theorem} of the article.
	
	\section{Preliminary result}\label{Wellposedness}

In this section we prove the existence and uniqueness for the initial boundary value problem. In particular we prove the following theorem: 
\begin{theorem}\label{Exitence uniqueness theorem}
	Let $q\in W^{1,\infty}(Q)$ be a time-dependent  matrix potential. Suppose $\vec{\phi}\in \Hbf^{1}(\Omega)$, $\vec{\psi}\in \textbf{L}^{2}(\Omega)$ and  $\vec{f}\in \Hbf^{1}(\Sigma)$ is such that $\vec{f}(0,x)=\vec{\phi}(x)$ for $x\in\PD\Omega$.  Then there exists a unique solution $\vec{u}$ to \eqref{Equation of interest} satisfying the following $$\vec{u}\in \textbf{C}^{1}\lb [0,T];\textbf{L}^{2}(\Omega)\rb\cap \textbf{C}\lb [0,T];\textbf{H}^{1}(\Omega)\rb  \mbox{and}\  \PD_{\nu}\vec{u}\in \textbf{L}^{2}(\Sigma).$$ 
	Moreover, there exists a constant $C>0$ depending only on $q$, $T$ and $\Omega$ such that
	\begin{align}\label{Estimate for solution}
	\lVert\PD_{\nu}\vec{u}\rVert_{\textbf{L}^{2}(\Sigma)}+\lVert\vec{u}\rVert_{\textbf{H}^{1}(Q)}\leq C \lb\lVert\vec{\phi}\rVert_{\textbf{H}^{1}(\Omega)}+ \lVert\vec{\psi}\rVert_{\textbf{L}^{2}(\Omega)}+\lVert\vec{f}\rVert_{\textbf{L}^{2}(\Sigma)}\rb
	\end{align}
	holds. 
	\begin{proof}
Let us write the solution $\vec{u}$ to \eqref{Equation of interest} into two terms as $\vec{u}(t,x):=\vec{v}(t,x)+\vec{w}(t,x)$ where $\vec{v}$ is solution to 

			\begin{equation}\label{Equation for v preliminary}
		\begin{aligned}
		\begin{cases}
		&\PD_{t}^{2}\vec{v}(t,x)-\Delta_{x}\vec{v}(t,x)=\vec{0},\ (t,x)\in Q\\
		&\vec{v}(0,x)=\vec{\phi}(x),\ \PD_{t}\vec{v}(0,x)=\vec{\psi}(x),\ x\in\Omega\\
		&\vec{v}(t,x)=\vec{f}(t,x),\ (t,x)\in\Sigma
		\end{cases}
		\end{aligned}
		\end{equation}
and $\vec{w}$ is solution to 
			\begin{equation}\label{Equation for w preliminary}
		\begin{aligned}
		\begin{cases}
		&\mathcal{L}_{q}\vec{w}(t,x)=-q(t,x)\vec{v}(t,x),\ (t,x)\in Q\\
		&\vec{w}(0,x)=\PD_{t}\vec{w}(0,x)=\vec{0},\ x\in\Omega\\
		&\vec{w}(t,x)=\vec{0},\ (t,x)\in\Sigma.
		\end{cases}
		\end{aligned}
		\end{equation}
	Since Equation \eqref{Equation for v preliminary} is a decoupled system of wave equations therefore following  Theorem $2.30$ in \cite{Katchalov_Kurylev_Lassas_Book_2001} there exists a unique solution $\vec{v}(t,x)$ to \eqref{Equation for v preliminary} such that 
	\begin{align}\label{Estimate for solution v preliminary}
	\begin{aligned}
	&\vec{v}\in \textbf{C}^{1}\lb [0,T];\textbf{L}^{2}(\Omega)\rb\cap \textbf{C}\lb [0,T];\textbf{H}^{1}(\Omega)\rb  \mbox{and}\  \PD_{\nu}\vec{v}\in \textbf{L}^{2}(\Sigma)\\
	\mbox{and} \ & 	
	\lVert\PD_{\nu}\vec{v}\rVert_{\textbf{L}^{2}(\Sigma)}+\lVert\vec{v}\rVert_{\textbf{H}^{1}(Q)}\leq C \lb\lVert\vec{\phi}\rVert_{\textbf{H}^{1}(\Omega)}+ \lVert\vec{\psi}\rVert_{\textbf{L}^{2}(\Omega)}+\lVert\vec{f}\rVert_{\textbf{L}^{2}(\Sigma)}\rb
	\end{aligned}
	\end{align}
	holds for some constant $C>0$ independent of $\vec{v}$. Using Equation \eqref{Estimate for solution v preliminary} and the fact that $q\in W^{1,\infty}(Q)$, we have $q\vec{v}\in \textbf{L}^{2}(Q)$.
Now following the arguments from \cite{Katchalov_Kurylev_Lassas_Book_2001,Lions_Magenese_Book,Lions} we prove the existence and uniqueness for $\vec{w}$ solution to \eqref{Equation for w preliminary}.  We define the time-dependent bilinear form $a(t;\cdot,\cdot)$ on $\textbf{H}^{1}_{0}(\Omega)$  by 
\begin{equation}\label{Definition of bilinear form}
a(t;\vec{h},\vec{g}):=\int\limits_{\Omega}\lb \nabla_{x}\vec{h}(x)\cdot\overline{\nabla_{x}\vec{g}(x)}+q(t,x)\vec{h}(x)\cdot\overline{\vec{h}(x)}\rb\D x,\ \mbox{for}\ \vec{h},\vec{g}\in \textbf{H}^{1}_{0}(\Omega).
\end{equation}
Since $\vec{h},\vec{g}$ are time-independent and $q\in L^{\infty}(Q)$ therefore for each fixed $\vec{h},\vec{g}\in \textbf{H}^{1}_{0}(\Omega)$ we have $a(t;\vec{h},\vec{g})\in L^{\infty}(0,T)$. Also using the Cauchy-Schwartz inequality and the fact that $q\in L^{\infty}(Q)$ we get
\begin{align}\label{Continuity of a}
\lvert a(t;\vec{h},\vec{g})\rvert \leq C\lVert \vec{h}\rVert_{\textbf{H}^{1}_{0}(\Omega)}\lVert \vec{g}\rVert_{\textbf{H}^{1}_{0}(\Omega)}
\end{align}
where constant $C>0$ is independent of $\vec{h}$ and $\vec{g}$. Next  consider 
\begin{align*}
\begin{aligned}
\lvert a(t;\vec{h},\vec{h})\rvert&= \Big\lvert \int\limits_{\Omega}\lb \lvert{\nabla_{x}\vec{h}}(x)\rvert^{2}+q(t,x)\vec{h}(x)\cdot\overline{\vec{h}(x)}\rb\D x\Big\rvert\\
& \quad \geq \lVert \nabla_{x}\vec{h}\rVert^{2}_{\textbf{L}^{2}(\Omega)}-\lVert q\rVert_{L^{\infty}(Q)}\lVert \vec{h}\rVert^{2}_{\textbf{L}^{2}(\Omega)}.
\end{aligned}
\end{align*}
Choosing $\lambda  >\lVert {q}\rVert_{L^{\infty}(Q)}$ in above equation, we get
\begin{align}\label{Coercivity fo a}
\begin{aligned}
\lvert a(t;\vec{h},\vec{h})\rvert+\lambda \lVert \vec{h}\rVert^{2}_{\textbf{L}^{2}(\Omega)} \geq \alpha \lVert \vec{h}\rVert^{2}_{\textbf{H}^{1}(\Omega)},\ \mbox{for some constant $\alpha>0$}.
\end{aligned}
\end{align}	
Combining Equations \eqref{Definition of bilinear form}, \eqref{Continuity of a} and \eqref{Coercivity fo a}, we get that $t\mapsto a(t;\vec{h},\vec{g})$ is continuous  
 bilinear  form for all $\vec{h},\vec{g}\in \textbf{H}^{1}_{0}(\Omega)$. Also note that the principle part of $a(t;\cdot,\cdot)$ given by 
\begin{align}\label{Principle part of a}
a(t;\vec{h},\vec{g})=\int\limits_{\Omega} \nabla_{x}\vec{h}(x)\cdot\overline{\nabla_{x}\vec{g}(x)}\D x
\end{align} 
is anti-symmetric.  Therefore using Theorem $8.1$ together with Remark $8.1$ of Chapter $3$  in \cite{Lions_Magenese_Book} (see also \cite{Lions}), we have that the initial boundary value problem given by \eqref{Equation for w preliminary} admits a unique solution $\vec{w}\in \textbf{C}^{1}\lb [0,T];\textbf{L}^{2}(\Omega)\rb \cap \textbf{C}\lb [0,T];\textbf{H}^{1}(\Omega)\rb$ and it satisfies the following estimate 
\begin{align}\label{Energy estimate for w}
\begin{aligned}
\int\limits_{Q}\lb \lvert\vec{w}(t,x)\rvert^{2}+\lvert\PD_{t}\vec{w}(t,x)\rvert^{2}+\lvert\nabla_{x}\vec{w}(t,x)\rvert^{2}\rb \D x\D t\leq C\lb \lVert\vec{\phi}\rVert_{\textbf{H}^{1}(\Omega)}+ \lVert\vec{\psi}\rVert_{\textbf{L}^{2}(\Omega)}+\lVert\vec{f}\rVert_{\textbf{L}^{2}(\Sigma)}\rb. 
\end{aligned}
\end{align}
 Next we prove that $\PD_{\nu}\vec{w}\in \textbf{L}^{2}(\Sigma)$. 
We follow the arguments similar to the one used in \cite{Lasiecka_Lions_Triggiani_Nonhomogeneous_BVP_hyperbolic_1986} for the wave equation with scalar potential. Let $\nu(x)$ denote the outward unit normal to $\PD\Omega$ at $x\in\PD\Omega$. We extend this to $\overline{\Omega}$ and denote the extended one by $\nu(x)$ itself.  Now consider the following  integral 
\begin{align*}
\begin{aligned}
&\int\limits_{Q}\lb T-t\rb \Lc_{q}\vec{w}(t,x)\cdot \lb \nu(x)\cdot\nabla_{x}\vec{w}(t,x)\rb\D x \D t =\int\limits_{Q}\lb T
-t\rb\PD_{t}^{2}\vec{w}(t,x)\cdot \lb \nu(x)\cdot\nabla_{x}\vec{w}(t,x)\rb\D x \D t\\
&\ \ \ \  -\int\limits_{Q}\lb T-t\rb\Delta_{x}\vec{w}(t,x)\cdot \lb \nu(x)\cdot\nabla_{x}\vec{w}(t,x)\rb\D x \D t +\int\limits_{Q}\lb T-t\rb q(t,x)\vec{w}(t,x)\cdot \lb \nu(x)\cdot\nabla_{x}\vec{w}(t,x)\rb\D x \D t\\
&=\sum_{j=1}^{n}\int\limits_{Q}\lb T-t\rb\PD_{t}^{2}w_{j}(t,x)\lb \nu(x)\cdot\nabla_{x}w_{j}(t,x)\rb\D x\D t-\sum_{j=1}^{n}\int\limits_{Q}\lb T-t\rb\Delta_{x}w_{j}(t,x)\lb \nu(x)\cdot\nabla_{x}w_{j}(t,x)\rb\D x\D t\\
&\ \ \ +\sum_{i,j=1}^{n}\int\limits_{Q}\lb T-t\rb q_{ij}(t,x)w_{j}(t,x)\lb \nu(x)\cdot\nabla_{x}{w_{j}}(t,x)\rb\D x\D t:= A_{1}+A_{2}+A_{3}
\end{aligned}
\end{align*}
where \begin{align*}
\begin{aligned}
&A_{1}:= \sum_{j=1}^{n}\int\limits_{Q}\lb T-t\rb\PD_{t}^{2}w_{j}(t,x)\lb \nu(x)\cdot\nabla_{x} w_{j}(t,x)\rb\D x\D t\\
&A_{2}:=-\sum_{j=1}^{n}\int\limits_{Q}\lb T-t\rb\Delta_{x}w_{j}(t,x)\lb \nu(x)\cdot\nabla_{x}w_{j}(t,x)\rb\D x\D t\\
&A_{3}:= \sum_{i,j=1}^{n}\int\limits_{Q}\lb T-t\rb q_{ij}(t,x)w_{j}(t,x)\lb \nu(x)\cdot\nabla_{x}{w_{i}}(t,x)\rb\D x\D t.
\end{aligned}
\end{align*}
Using Equation \eqref{Equation for w preliminary}, we have
\begin{align}\label{Sum of A1, A2 and A3}
A_{1}+A_{2}+A_{3}=-\int\limits_{Q}(T-t)q(t,x)\vec{v}(t,x)\cdot\lb\nu(x)\cdot\nabla_{x}\vec{w}(t,x)\rb\D x\D t.
\end{align}
We simplify each of $A_{j}$ for $1\leq j\leq 3$. Using integration parts, we have $A_{1}$ is 
\begin{align*}
\begin{aligned}
A_{1}&=-T\sum_{j=1}^{n}\int\limits_{\Omega} \PD_{t}w_{j}(0,x)\lb \nu(x)\cdot\nabla_{x}w_{j}(0,x)\rb\D x+\sum_{j=1}^{n}\int\limits_{Q}\PD_{t}w_{j}(t,x)\lb \nu(x)\cdot\nabla_{x}w_{j}(t,x)\rb \D x\D t\\
&\ \ \ \ \ \ \ \ \ \ -\sum_{j=1}^{n}\int\limits_{Q}\lb T-t\rb\PD_{t}w_{j}(t,x)\lb \nu(x)\cdot\nabla_{x}\PD_{t}w_{j}(t,x)\rb \D x\D t\\
&=-T\int\limits_{\Omega} \PD_{t}\vec{w}(0,x)\cdot\lb \nu(x)\cdot\nabla_{x}\vec{w}(0,x)\rb\D x+\int\limits_{Q}\PD_{t}\vec{w}(t,x)\cdot\lb \nu(x)\cdot\nabla_{x}\vec{w}(t,x)\rb\D x\D t\\
&\ \ \ \ \ \ \ \ \ -\int\limits_{Q}\frac{ T-t}{2} \nabla_{x}\cdot\lb \nu(x)\lvert\PD_{t}\vec{w}(t,x)\rvert^{2}\rb\D x\D t +\int\limits_{Q}\frac{T-t}{2}\lvert\PD_{t}\vec{w}(t,x)\rvert^{2}\nabla_{x}\cdot\nu(x)\D x \D t.
\end{aligned}
\end{align*}
Using the Gauss divergence theorem and the fact that $\vec{w}|_{\Sigma}=\vec{w}|_{t=0}=\PD_{t}\vec{w}|_{t=0}=0$, we get 
\begin{align}\label{Expression for A1}
\begin{aligned}
A_{1}&=\int\limits_{Q}\PD_{t}\vec{w}(t,x)\cdot\lb \nu(x)\cdot\nabla_{x}\vec{w}(t,x)\rb\D x\D t +\int\limits_{Q}\frac{T-t}{2}\lvert\PD_{t}\vec{w}(t,x)\rvert^{2}\nabla_{x}\cdot\nu(x)\D x \D t.
\end{aligned}
\end{align}\
Now using the integration by parts in the expression for $A_{2}$, we have
\begin{align*}
\begin{aligned}
A_{2}&=-\sum_{j=1}^{n}\int\limits_{Q}\lb T-t\rb\Delta_{x}w_{j}(t,x)\lb \nu(x)\cdot\nabla_{x}w_{j}(t,x)\rb\D x\D t\\
&=-\sum_{j=1}^{n}\int\limits_{Q}(T-t)\sum_{k,l=1}^{n}\PD_{k}^{2}w_{j}(t,x)\nu_{l}(x)\PD_{l}w_{j}(t,x)\D x\D t\\
&=-\sum_{j=1}^{n}\int\limits_{Q}(T-t)\nabla_{x}\cdot \lb \nabla_{x}w_{j}(t,x)\nu(x)\cdot\nabla_{x}w_{j}(t,x)\rb\D x\D t-\int\limits_{Q}\frac{T-t}{2}\nabla_{x}\cdot\nu(x)\lvert\nabla_{x}\vec{w}(t,x)\rvert^{2}\D x\D t\\
&\ \ \  +\sum_{j=1}^{n}\int\limits_{Q}(T-t)\sum_{k,l=1}^{n}\PD_{k}w_{j}(t,x)\PD_{k}\nu_{l}(x)\PD_{l}w_{j}(t,x)\D x\D t+\int\limits_{Q}\frac{T-t}{2}\nabla_{x}\cdot\lb \nu(x)\lvert\nabla_{x}\vec{w}(t,x)\rvert^{2}\rb\D x \D t.\\
\end{aligned}
\end{align*}
Gauss divergence theorem and $\vec{u}|_{\Sigma}=0$, gives 
\begin{equation}\label{Expression for A2}
\begin{aligned}
A_{2}&=-\int\limits_{\Sigma}\frac{T-t}{2}\lvert\PD_{\nu}\vec{w}(t,x)\rvert^{2}\D S_{x}\D t+\sum_{j=1}^{n}\int\limits_{Q}(T-t)\sum_{k,l=1}^{n}\PD_{k}w_{j}(t,x)\PD_{k}\nu_{l}(x)\PD_{l}w_{j}(t,x)\D x\D t\\
&\ \ \ \ \ \ \ -\int\limits_{Q}\frac{T-t}{2}\nabla_{x}\cdot\nu(x)\lvert\nabla_{x}\vec{w}(t,x)\rvert^{2}\D x\D t
\end{aligned}
\end{equation}
Finally using Equations \eqref{Expression for A1}, \eqref{Expression for A2} and the Cauchy-Schwartz inequality in \eqref{Sum of A1, A2 and A3}, we get 
\begin{equation*}
\begin{aligned}
\Big\lvert\int\limits_{\Sigma}\frac{T-t}{2}\lvert\PD_{\nu}\vec{w}(t,x)\rvert^{2}\D S_{x}\D t\Big\rvert\leq C\int\limits_{Q}\lb\lvert\vec{v}(t,x)\rvert^{2}+\lvert\vec{w}(t,x)\rvert^{2}+ \lvert\PD_{t}\vec{w}(t,x)\rvert^{2}+\lvert\nabla_{x}\vec{w}(t,x)\rvert^{2}\rb\D x\D t.
\end{aligned}
\end{equation*}
Hence using \eqref{Estimate for solution v preliminary} and \eqref{Energy estimate for w} in the above equation, we get 
\begin{align*}
\Big\lvert \int\limits_{\Sigma}\frac{T-t}{2}\lvert\PD_{\nu}\vec{w}(t,x)\rvert^{2}\D S_{x}\D t\Big\rvert\leq C\lb \lVert\vec{\psi}\rVert_{\textbf{L}^{2}(\Omega)}+\lVert\vec{f}\rVert_{\textbf{L}^{2}(\Sigma)}\rb. 
\end{align*}
Thus, we have  shown the following
	\begin{align}\label{Estimate for solution w preliminary}
\begin{aligned}
&\vec{w}\in \textbf{C}^{1}\lb [0,T];\textbf{L}^{2}(\Omega)\rb\cap \textbf{C}\lb [0,T];\textbf{H}^{1}(\Omega)\rb  \mbox{and}\  \PD_{\nu}\vec{w}\in \textbf{L}^{2}(\Sigma)\\
\mbox{and} \ & 	
\lVert\PD_{\nu}\vec{w}\rVert_{\textbf{L}^{2}(\Sigma)}+\lVert\vec{w}\rVert_{\textbf{H}^{1}(Q)}\leq C \lb\lVert\vec{\phi}\rVert_{\textbf{H}^{1}(\Omega)}+ \lVert\vec{\psi}\rVert_{\textbf{L}^{2}(\Omega)}+\lVert\vec{f}\rVert_{\textbf{L}^{2}(\Sigma)}\rb.
\end{aligned}
\end{align}
 Now combining Equations \eqref{Estimate for solution v preliminary} and \eqref{Estimate for solution w preliminary}, we get 
 	\begin{align*}
 \begin{aligned}
 &\vec{u}\in \textbf{C}^{1}\lb [0,T];\textbf{L}^{2}(\Omega)\rb\cap \textbf{C}\lb [0,T];\textbf{H}^{1}(\Omega)\rb  \mbox{and}\  \PD_{\nu}\vec{u}\in \textbf{L}^{2}(\Sigma)\\
 \mbox{and} \ & 	
 \lVert\PD_{\nu}\vec{u}\rVert_{\textbf{L}^{2}(\Sigma)}+\lVert\vec{u}\rVert_{\textbf{H}^{1}(Q)}\leq C \lb\lVert\vec{\phi}\rVert_{\textbf{H}^{1}(\Omega)}+ \lVert\vec{\psi}\rVert_{\textbf{L}^{2}(\Omega)}+\lVert\vec{f}\rVert_{\textbf{L}^{2}(\Sigma)}\rb.
 \end{aligned}
 \end{align*}
 This completes the proof of Theorem \ref{Exitence uniqueness theorem}. 
		\end{proof}
\end{theorem}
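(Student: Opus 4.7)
The plan is to decompose $\vec{u} = \vec{v} + \vec{w}$, where $\vec{v}$ carries the inhomogeneous data but solves the decoupled homogeneous wave system, and $\vec{w}$ absorbs the matrix potential coupling with zero initial and boundary data and forcing $-q\vec{v}$. This reduces the proof to three tasks: solve the decoupled system for $\vec{v}$, solve the coupled problem with zero data for $\vec{w}$, and recover hidden boundary regularity for $\vec{w}$.

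For the decoupled problem, each component of $\vec{v}$ satisfies a scalar wave equation with $H^1$ Dirichlet data, so Theorem 2.30 of \cite{Katchalov_Kurylev_Lassas_Book_2001} applied componentwise yields a solution $\vec{v}$ in the claimed class together with $\partial_\nu \vec{v} \in \textbf{L}^2(\Sigma)$ and the natural energy bound. For $\vec{w}$, I would set up the time-dependent bilinear form $a(t;\vec{h},\vec{g}) = \int_\Omega \bigl(\nabla_x \vec{h} \cdot \overline{\nabla_x \vec{g}} + q(t,x)\vec{h} \cdot \overline{\vec{g}}\bigr)\,dx$ on $\textbf{H}^1_0(\Omega)$ and verify: continuity from $q \in L^\infty(Q)$, G\aa{}rding coercivity by absorbing $q$ through $\lambda > \|q\|_{L^\infty(Q)}$, symmetry of the principal part, and time regularity of $t\mapsto a(t;\vec{h},\vec{g})$ via $q \in W^{1,\infty}(Q)$. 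The variational theory for second-order hyperbolic equations (Theorem 8.1 of Chapter 3 in \cite{Lions_Magenese_Book}) then yields existence, uniqueness, and the energy estimate for $\vec{w}$ in $\textbf{C}^1([0,T];\textbf{L}^2(\Omega)) \cap \textbf{C}([0,T];\textbf{H}^1(\Omega))$.

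The delicate step is showing $\partial_\nu \vec{w} \in \textbf{L}^2(\Sigma)$. Following the multiplier argument of \cite{Lasiecka_Lions_Triggiani_Nonhomogeneous_BVP_hyperbolic_1986}, I would extend the outward unit normal $\nu(x)$ smoothly to $\overline{\Omega}$ and test $\mathcal{L}_q \vec{w} = -q\vec{v}$ against $(T-t)\bigl(\nu(x)\cdot\nabla_x \vec{w}\bigr)$. Integrating over $Q$ splits the result into three pieces $A_1, A_2, A_3$ coming from $\partial_t^2\vec{w}$, $-\Delta_x\vec{w}$, and the potential term respectively. In $A_1$, integration by parts in $t$ combined with $\vec{w}(0) = \partial_t \vec{w}(0) = 0$ and $\vec{w}|_\Sigma = 0$ kills all boundary contributions. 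In $A_2$, integration by parts in $x$ together with the key identity that $\vec{w}|_\Sigma = 0$ forces the tangential gradient to vanish on $\Sigma$, so $|\nabla_x \vec{w}|^2 = |\partial_\nu \vec{w}|^2$ there, isolates the boundary integral $-\int_\Sigma \frac{T-t}{2}|\partial_\nu \vec{w}|^2 \,dS_x\,dt$. The term $A_3$ and the right-hand side are controlled by the energy of $\vec{w}$ and $\|\vec{v}\|_{\textbf{L}^2(Q)}$ via Cauchy-Schwarz. Rearranging and invoking the already-established energy estimates for $\vec{v}$ and $\vec{w}$ yields the desired bound on $\partial_\nu \vec{w}$. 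Adding the estimates for $\vec{v}$ and $\vec{w}$ then completes the proof.

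The main obstacle is carrying out this multiplier identity cleanly, because each of the three pieces generates numerous boundary and interior contributions that must be tracked precisely; in particular, one must verify that the potential cross-terms $q_{ij}w_j(\nu\cdot\nabla_x w_i)$ produce only lower-order contributions absorbed by the energy estimate, and that the vector-valued structure introduces no genuinely new difficulty since the leading-order computation is diagonal in the components.
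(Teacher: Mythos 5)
Your proposal follows essentially the same route as the paper: the same splitting $\vec{u}=\vec{v}+\vec{w}$, the componentwise application of Theorem 2.30 of \cite{Katchalov_Kurylev_Lassas_Book_2001} for $\vec{v}$, the time-dependent bilinear form with the variational theory of \cite{Lions_Magenese_Book} for $\vec{w}$, and the multiplier $(T-t)\lb\nu(x)\cdot\nabla_{x}\vec{w}\rb$ in the style of \cite{Lasiecka_Lions_Triggiani_Nonhomogeneous_BVP_hyperbolic_1986} for the hidden regularity $\PD_{\nu}\vec{w}\in\textbf{L}^{2}(\Sigma)$. The argument is correct and matches the paper's proof in all essentials.
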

	\section{Statement of the main result}\label{Main result statement}
	Before stating the main result of this article, we introduce some notation. Following \cite{Bukhgeim_Uhlmann_Calderon_problem_partial_Cauchy_data_2002},  for
	fix  $\omega_{0}\in\mathbb{S}^{n-1}$ and define 
	\begin{align*}
	\partial\Omega_{+,\omega_{0}}:=\left\{x\in\partial\Omega:\ \nu(x)\cdot\omega_{0}\geq 0 \right\},\ \ \partial\Omega_{-,\omega_{0}}:=\left\{x\in\partial\Omega:\ \nu(x)\cdot\omega_{0}\leq 0 \right\}
	\end{align*}
 where $\nu(x)$ is outward unit normal to $\partial\Omega$ at $x\in\partial\Omega$. Corresponding to $\partial\Omega_{\pm,\omega_{0}}$, we denote the lateral boundary parts by 
	$\Sigma_{\pm,\omega_{0}}:=(0,T)\times\partial\Omega_{\pm,\omega_{0}}$. We denote by $F=(0,T)\times F'$ and $G=(0,T)\times G'$ where $F'$ and $G'$ are small enough open neighbourhoods of $\partial\Omega_{+,\omega_{0}}$ and  $\partial\Omega_{-,\omega_{0}}$ respectively in $\partial\Omega$. Now let $\vec{u}$ be the solution to Equation \eqref{Equation of interest} with $\vec{\phi}\in \Hbf^{1}(\Omega)$, $\vec{\psi}\in  \Lbf(\Omega)$ and $\vec{f}\in \Hbf^1(\Sigma)$ such that $\vec{f}(0,x)=\vec{\phi}(x)$ for $x\in\PD\Omega$. 
Next using Theorem \ref{Exitence uniqueness theorem}, we can define our continuous linear input-output operator $\widetilde{\Lambda_{q}}:\Hbf^{1}(\Omega)\times \Lbf(\Omega)\times \Hbf^{1}(\Sigma)\ \rightarrow  \Hbf^{1}(\Omega)\times\Lbf(G)$ given by
	\begin{align}\label{Definition of restricted input-output operator}
	\widetilde{\Lambda_{q}}(\vec{\phi},\vec{\psi},\vec{f})=\Big(\vec{u}|_{t=T},\partial_{\nu}\vec{u}|_{G}\Big)
	\end{align} 
	where  $\vec{u}$ is the solution to \eqref{Equation of interest}.	
In this paper, our aim is to prove the following uniqueness result for determining $q$  from the knowledge of $\widetilde{\Lambda}_{q}$.
	\begin{theorem}\label{Main Theorem}
		Let $q^{(1)}(t,x)$ and $q^{(2)}(t,x)$ be two sets of potentials 
		such that the components of each $q^{(i)}$ are in $W^{1,\infty}(Q)$ for $i=1,2$. Let $\vec{u}^{(i)}$ be solutions to \eqref{Equation of interest} when $q=q^{(i)}$ and  $\widetilde{\Lambda}_{q^{(i)}}$ for $i=1,2$ be the input-output operators  defined by \eqref{Definition of input-output operator} corresponding to $\vec{u}^{(i)}$. If
		\begin{align}\label{Equality of input-output operator}
		\widetilde{\Lambda}_{q^{(1)}}(\vec{\phi},\vec{\psi},\vec{f})=\widetilde{\Lambda}_{q^{(2)}}(\vec{\phi},\vec{\psi},\vec{f}),\ \text{for} \ (\vec{\phi},\vec{\psi},\vec{f})\in \Hbf^{1}(\Omega)\times\Lbf(\Omega)\times \Hbf^{1}(\Sigma), 
		\end{align}
		then 
		\begin{align*}
		q^{(1)}(t,x)=q^{(2)}(t,x),\ (t,x)\in Q.
		\end{align*}
	\end{theorem}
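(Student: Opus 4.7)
The plan is to set $q := q^{(1)} - q^{(2)}$ (extended by zero outside $Q$) and show that $q \equiv 0$ via an integral identity fed by geometric optics (GO) solutions whose construction was carried out in Sections \ref{Carleman Estimate} and \ref{GO solutions}. First I would pick arbitrary Cauchy data $(\vec{\phi},\vec{\psi},\vec{f})$ for which the unique solutions $\vec{u}^{(1)},\vec{u}^{(2)}$ of \eqref{Equation of interest} exist by Theorem \ref{Exitence uniqueness theorem}, and set $\vec{v}:=\vec{u}^{(1)}-\vec{u}^{(2)}$. Then $\vec{v}$ solves $\Lc_{q^{(1)}}\vec{v} = -q\,\vec{u}^{(2)}$ with $\vec{v}(0,\cdot)=0$, $\partial_t\vec{v}(0,\cdot)=0$, $\vec{v}|_{\Sigma}=0$, and by the assumption \eqref{Equality of input-output operator} we also have $\vec{v}(T,\cdot)=0$ and $\partial_\nu\vec{v}|_{G}=0$. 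Pairing against a test function $\vec{u}^{*}$ solving the adjoint problem $\Lc_{(q^{(1)})^*}\vec{u}^{*}=0$ and integrating by parts, the boundary contributions collapse provided $\vec{u}^{*}(T,\cdot)=0$ and $\vec{u}^{*}|_{F}=0$, yielding the orthogonality identity
\begin{equation*}
\int_{Q} q(t,x)\,\vec{u}^{(2)}(t,x)\cdot\overline{\vec{u}^{*}(t,x)}\,\D x\,\D t \;=\; 0.
\end{equation*}

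Next I would feed this identity with polarized GO solutions of the form
\begin{equation*}
\vec{u}^{(2)}(t,x)=e^{-(t+x\cdot\omega_{0})/h}\bigl(\vec{a}(t,x)+h\vec{r}_{h}(t,x)\bigr),\qquad \vec{u}^{*}(t,x)=e^{(t+x\cdot\omega_{0})/h}\bigl(\vec{b}(t,x)+h\vec{s}_{h}(t,x)\bigr),
\end{equation*}
constructed in \S \ref{GO solutions}, where the small parameter $h>0$ tends to $0$. Since $|\omega_{0}|=1$, the phase $\psi=t+x\cdot\omega_{0}$ satisfies the eikonal identity $|\partial_{t}\psi|^{2}-|\nabla\psi|^{2}=0$, so the leading $O(1/h)$ terms vanish provided $\vec{a}$ and $\vec{b}$ obey the transport equation $(\partial_{t}-\omega_{0}\cdot\nabla)\vec{a}=0$ (and similarly for $\vec{b}$), which is solved by choosing $\vec{a}(t,x)=\vec{A}(x+t\omega_{0})$ with $\vec{A}$ an arbitrary smooth $\Cb^{n}$-valued function supported so that $\vec{u}^{*}$ vanishes on the unmeasured face $F$. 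The Carleman estimates of \S \ref{Carleman Estimate} then yield remainders $\vec{r}_{h},\vec{s}_{h}$ of order $O(1)$ in $\textbf{L}^{2}$, and a weighted estimate that absorbs the exponential weights on the unmeasured boundary pieces. Substituting into the orthogonality identity and letting $h\to 0$, the two exponentials cancel and the product of remainders is of order $h$, so only the principal amplitudes survive:
\begin{equation*}
\int_{Q} q(t,x)\,\vec{A}(x+t\omega_{0})\cdot\overline{\vec{B}(x+t\omega_{0})}\,\D x\,\D t \;=\; 0.
\end{equation*}

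Now comes the matrix-valued step, which is slightly new compared with the scalar case of \cite{Kian}: choosing $\vec{A}=\alpha\,\vec{e}_{j}$ and $\vec{B}=\beta\,\vec{e}_{i}$ for the standard basis vectors and scalar profiles $\alpha,\beta$, the bilinear form $q\vec{A}\cdot\overline{\vec{B}}$ collapses to $q_{ij}\,\alpha\,\overline{\beta}$. Hence for every $\phi=\alpha\overline{\beta}\in C_{0}^{\infty}$ with support in an appropriate neighbourhood,
\begin{equation*}
\int_{Q} q_{ij}(t,x)\,\phi(x+t\omega_{0})\,\D x\,\D t \;=\; 0,
\end{equation*}
and after the change of variables $y=x+t\omega_{0}$ this is exactly the vanishing of the light-ray transform of $q_{ij}$ (extended by zero to $\Rb\times\Rb^{n}$) in the direction $(1,-\omega_{0})$. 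Since $\omega_{0}$ may be varied in an open subset of $\Sb^{n-1}$ via Kian's approach, injectivity of the light-ray transform on compactly supported integrable functions gives $q_{ij}\equiv 0$ in $Q$ for each $i,j$, completing the proof.

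The principal obstacle is not the light-ray inversion at the end, which is classical, but rather making the GO construction and the orthogonality identity compatible with the \emph{partial} data. Concretely, $\vec{u}^{*}$ must vanish (to leading order with an $h$-dependent remainder that the Carleman weight controls) on $\Sigma\setminus G$, while $\vec{u}^{(2)}$ must carry the Cauchy data that the input-output operator can see; the weight $e^{\pm(t+x\cdot\omega_{0})/h}$ is chosen precisely so that the exponential is small on the unmeasured faces $\partial\Omega_{\mp,\omega_{0}}$, and the Carleman estimate of \S \ref{Carleman Estimate} quantifies this smallness so that the unmeasured boundary integrals go to $0$ with $h$. Additionally, since only $\vec{v}(T,\cdot)$ is known and not $\partial_{t}\vec{v}(T,\cdot)$, the terminal boundary term forces $\vec{u}^{*}(T,\cdot)=0$ to kill the unknown trace; this compatibility condition at $t=T$ is another ingredient that the GO construction must respect.
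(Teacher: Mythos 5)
Your overall architecture matches the paper's: form the difference $\vec{u}=\vec{u}^{(1)}-\vec{u}^{(2)}$, derive the orthogonality identity, plug in a pair of geometric optics solutions with opposite exponential weights, use the Carleman estimate of \S\ref{Carleman Estimate} to dispose of the terms the data cannot see, polarize with basis vectors $\vec{e}_i,\vec{e}_j$ to isolate $q_{ij}$, and invert a light-ray/Fourier transform over an open set of directions (the paper phrases this last step as vanishing of $\widehat{q\vec{K}_1\cdot\vec{K}_2}$ on the open cone $\bigcup_{|\omega-\omega_0|<\ve}(1,-\omega)^{\perp}$ plus Paley--Wiener, which is equivalent to your light-ray argument). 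Two points, however, need repair.

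First, your exponents are assigned backwards. You take $\vec{u}^{(2)}\sim e^{-\vp/h}$ and the adjoint solution $\vec{u}^{*}\sim e^{+\vp/h}$. The unknown Neumann data lives on $\Sigma\setminus G\subset\Sigma_{+,\ve,\omega}$, and the Carleman estimate \eqref{Boundary carleman estimate} controls $\partial_\nu\vec{u}$ there only with the weight $e^{-\vp/h}$; likewise the source term it must absorb is $he^{-\vp/h}q\vec{u}^{(2)}$, which stays bounded only if $\vec{u}^{(2)}$ grows like $e^{+\vp/h}$. So the growing solution must solve the original equation (with $q^{(2)}$) and the \emph{decaying} one the adjoint equation, as in Propositions \ref{Growing solution} and \ref{Decaying solution}; with your assignment the products $\partial_\nu\vec{u}\cdot\overline{\vec{u}^{*}}$ and $e^{-\vp/h}q\vec{u}^{(2)}$ are exponentially large and the limit $h\to0^+$ fails. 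This is fixable by swapping the two signs, but as written the estimates do not close.

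Second, you propose to make the boundary contributions ``collapse'' by constructing $\vec{u}^{*}$ with $\vec{u}^{*}(T,\cdot)=0$ and $\vec{u}^{*}|_{F}=0$, arranged ``by choosing the support of the amplitude.'' The construction in \S\ref{GO solutions} does not deliver this: the remainder $h\vec{R}_d$ produced by the fixed-point argument carries no boundary or terminal conditions, so restricting the support of $\vec{B}_d$ does not make the full solution vanish on $F$ or at $t=T$ (exact vanishing would require a separate duality construction in the spirit of \cite{Kian}, which this paper does not use). The paper's actual mechanism is different: it keeps both leftover terms, $\int_\Omega\partial_t\vec{u}(T,x)\cdot\overline{\vec{v}(T,x)}\,\D x$ and $\int_{\Sigma\setminus G}\partial_\nu\vec{u}\cdot\overline{\vec{v}}\,\D S_x\D t$, and shows each tends to $0$ as $h\to0^+$ by combining the decay of $\vec{v}$ with the terms $h\|e^{-\vp(T,\cdot)/h}\partial_t\vec{u}(T,\cdot)\|^2$ and $h(e^{-\vp/h}\partial_\nu\vp\,\partial_\nu\vec{u},e^{-\vp/h}\partial_\nu\vec{u})_{\Lbf(\Sigma_{+,\omega})}$ on the left of \eqref{Boundary carleman estimate}. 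You do invoke this Carleman mechanism in your closing paragraph, so the correct idea is present, but it is conflated with the unachievable exact-vanishing requirement; you should commit to the estimate-and-pass-to-the-limit route and drop the support conditions on $\vec{u}^{*}$.
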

To the best of our knowledge the problem considered here has not been studied and infact  this is the first result which deals with the determination of time-dependent matrix valued coefficients appearing in hyperbolic partial differential equations from the boundary measurements. Theorem \ref{Main Theorem}, can be proved by using the Carleman estimate together with constructing  the geometric optics solutions for the wave equation with matrix valued potential. For time dependent scalar potential case this approach for hyperbolic inverse problems first appeared in \cite{Kian_Stability_potential_partial_data,Kian} and recently this approach has been used in \cite{Bellassoued_Rassas,Hu_Kian_Singular_potential_determination,Kian_Damping_partial_data_2016,Kian_Oksanen_anisotropic_wave_potential,Krishnan_Vashisth_Relativistic} for determining the coefficients in the single wave equations. To prove Theorem \ref{Main Theorem}, we follow the arguments similar to \cite{Bellassoued_Rassas,Kian_Stability_potential_partial_data,Kian}.

% using the Carleman estimate with linear weight and by constructing the geometric optics solutions for the wave equation with matrix value potential, 
%
%To prove the above theorem, we firs Carleman estimate with linear weight and then using the Carlemam estimate we construct the geometric optics solutions for the wave equation with matrix valued potential.
%
%we follow the arguments from \cite{Kian} where he proved the uniqueness result for determining the time dependent scalar potential appearing in \eqref{Equation for u} from the partial boundary measurements. Recently these arguments have 
	\section{Carleman Estimate}\label{Carleman Estimate}
		The present section is devoted to deriving a Carleman estimate for \eqref{Equation of interest} involving the boundary terms and it will be used to control the boundary terms over subsets of the boundary where measurements are not available. In order to state the Carleman estimate, first we will fix some notation.
	For $\vec{v}=\lb v_{1},v_{2},v_{3},\cdot\cdots,v_{n}\rb^{T}\in \Hbf^{1}(Q)$, we define the $L^{2}$ norm of $\vec{v}$ by
	\[\lVert \vec{v}\rVert_{\Lbf(Q)}:=\lb\sum_{j=1}^{n}\int\limits_{Q}\lvert v_{j}(t,x)\rvert^{2}\D x\D t\rb^{1/2}=\lb\sum_{j=1}^{n}\lVert v_{j}\rVert_{L^{2}(Q)}^{2}\rb^{1/2}\]
	and 
	\[\n_{x}\vec{v}:=\left(\n_{x}v_{1},\n_{x}v_{2},\n_{x}v_{3},\cdots,\n_{x}v_{n}\right)^{T}\ \text{and}\  \omega\cdot\n_{x}\vec{v}:=\lb\omega\cdot\n_{x}v_{1},\omega\cdot\n_{x}v_{2},\cdots,\omega\cdot\n_{x}v_{n}\rb^{T}.\]
	\begin{theorem}\label{Boundary Carleman estimate}
		Let $\vp(t,x):=t+x\cdot\omega$, where $\omega\in \Sb^{n-1}$ is fixed and $q\in L^{\infty}(Q)$. Then the  Carleman estimate 
		\begin{align}\label{Boundary carleman estimate}
		\begin{aligned}
		&\lVert e^{-\vp/h}\vec{u}\rVert^{2}_{\Lbf (Q)}+h\lb  e^{-\vp/h}\partial_{\nu}\vp\partial_{\nu}\vec{u},e^{-\vp/h}\partial_{\nu}\vec{u}\rb_{\Lbf(\Sigma_{+,\omega})}+h\lb e^{-\vp(T,\cdot)/h}\partial_{t}\vec{u}(T,\cdot),e^{-\vp(T,\cdot)/h}\partial_{t}\vec{u}(T,\cdot)\rb_{\Lbf(\Omega)}\\
		&\leq C\Bigg(\lVert he^{-\vp/h}\Lc_{q} \vec{u}\rVert^{2}_{\Lbf(Q)}+\frac{1}{h}\lb e^{-\vp(T,\cdot)/h}u(T,\cdot),e^{-\vp(T,\cdot)/h}\vec{u}(T,\cdot)\rb_{\Lbf(\Omega)}\\
		&\quad \quad+h\lb e^{-\vp(T,\cdot)/h}\nabla_{x}\vec{u}(T,\cdot),e^{-\vp(T,\cdot)/h}\nabla_{x}\vec{u}(T,\cdot)\rb_{\Lbf(\Omega)}+h\lb  e^{-\vp/h}\lb-\partial_{\nu}\vp\rb\partial_{\nu}\vec{u},e^{-\vp/h}\partial_{\nu}\vec{u}\rb_{\Lbf(\Sigma_{-,\omega})}\Bigg)
		\end{aligned}
		\end{align}
		holds for all $\vec{u}\in \textbf{C}^{\ 2}(Q)$ with
		\begin{align*}
		\vec{u}|_{\Sigma}=0,\ \vec{u}|_{t=0}=\partial_{t}\vec{u}|_{t=0}=0,
		\end{align*} 
		and $h$ small enough.
		\end{theorem}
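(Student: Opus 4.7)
The plan is to prove the estimate via the standard Carleman conjugation-and-multiplier technique. First I would substitute $\vec{v} := e^{-\vp/h}\vec{u}$ and rewrite everything in terms of $\vec{v}$. The key algebraic observation is that $\vp(t,x) = t + x\cdot\omega$ is a null phase for $\Box$, since $(\partial_t\vp)^2 = |\nabla_x\vp|^2 = 1$; thus the conjugated wave operator loses its $h^{-2}$ term and satisfies
\[
he^{-\vp/h}\Lc_q\vec{u} = h\Box\vec{v} + 2X\vec{v} + hq\vec{v}, \qquad X := \partial_t - \omega\cdot\nabla_x.
\]
The hypotheses on $\vec{u}$ transfer to $\vec{v}|_\Sigma=0$ and $\vec{v}|_{t=0}=\partial_t\vec{v}|_{t=0}=0$. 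The potential contribution $hq\vec{v}$ is lower order and will be absorbed at the end for $h$ small, so the bulk of the work is to prove a Carleman estimate for $P_h\vec{v} := h\Box\vec{v} + 2X\vec{v}$.

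Next I would expand $\lVert P_h\vec{v}\rVert^2_{\Lbf(Q)} = \lVert h\Box\vec{v}\rVert^2 + 4\lVert X\vec{v}\rVert^2 + 4\,\mathrm{Re}\,(h\Box\vec{v}, X\vec{v})_{\Lbf(Q)}$ and compute the cross term via integration by parts. Because $X$ has constant coefficients and $\vec{v}$ vanishes on $\Sigma$ and at $t=0$ (together with $\partial_t\vec{v}|_{t=0}$), only contributions from $\{t=T\}\times\Omega$ and from $\Sigma$ survive. A tedious but elementary component-wise computation (mimicking the $A_1, A_2$ manipulations already done in \S\ref{Wellposedness}) yields
\[
4\,\mathrm{Re}\,(h\Box\vec{v}, X\vec{v}) = 2h\!\int_\Omega\!\bigl(|\partial_t\vec{v}(T)|^2 - 2\,\mathrm{Re}(\partial_t\vec{v}(T)\cdot\overline{\omega\cdot\nabla_x\vec{v}(T)}) + |\nabla_x\vec{v}(T)|^2\bigr)\,dx + 2h\!\int_\Sigma(\omega\cdot\nu)|\partial_\nu\vec{v}|^2\,dS\,dt.
\]
Splitting the $\Sigma$-integral by the sign of $\omega\cdot\nu = \partial_\nu\vp$ and moving the $\Sigma_{-,\omega}$ contribution to the right, and applying Young's inequality to the mixed $t=T$ term (so as to extract a positive $\tfrac{h}{2}\lVert\partial_t\vec{v}(T)\rVert^2$ at the cost of a $-h\lVert\nabla_x\vec{v}(T)\rVert^2$ that moves to the right), I obtain
\[
\lVert X\vec{v}\rVert^2 + h\lVert\partial_t\vec{v}(T)\rVert^2 + h\!\!\int_{\Sigma_{+,\omega}}\!(\omega\cdot\nu)|\partial_\nu\vec{v}|^2 \le C\Bigl(\lVert P_h\vec{v}\rVert^2 + h\lVert\nabla_x\vec{v}(T)\rVert^2 + h\!\!\int_{\Sigma_{-,\omega}}\!(-\omega\cdot\nu)|\partial_\nu\vec{v}|^2\Bigr).
\]

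To upgrade $\lVert X\vec{v}\rVert^2$ to the target $\lVert\vec{v}\rVert^2$ on the left, I would exploit the transport structure of $X$. Through any $(t,x)\in Q$ the backward characteristic $r\mapsto(t-r, x+r\omega)$ exits $Q$ through either $\{t=0\}\times\Omega$ or through $\partial\Omega_{+,\omega}$ (this is exactly the ``good'' half of $\Sigma$, because the direction of exit $\omega$ points outward there); on both, $\vec{v}$ vanishes. Hence $\vec{v}(t,x) = \int_0^{s_*(t,x)}(X\vec{v})(t-r,x+r\omega)\,dr$ with $s_*\le T$, and Cauchy--Schwarz together with a change of variables $(t',x')=(t-r,x+r\omega)$ give the Poincar\'e-type bound $\lVert\vec{v}\rVert^2_{\Lbf(Q)}\le T^2\lVert X\vec{v}\rVert^2_{\Lbf(Q)}$. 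Inserting this into the previous display puts $\lVert\vec{v}\rVert^2$ on the left-hand side. The matrix potential is then absorbed using $\lVert P_h\vec{v}\rVert^2 \le 2\lVert P_h\vec{v}+hq\vec{v}\rVert^2 + 2h^2\lVert q\rVert_\infty^2\lVert\vec{v}\rVert^2$, the last term being absorbed into $\lVert\vec{v}\rVert^2$ on the LHS when $h$ is sufficiently small.

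The final step is to convert back to $\vec{u}$. The $\Lbf(Q)$ and $\Sigma$ terms match directly, since $\vec{v}|_\Sigma = 0$ implies $\partial_\nu\vec{v}|_\Sigma = e^{-\vp/h}\partial_\nu\vec{u}|_\Sigma$. For the $t=T$ terms, the identities $\partial_t\vec{v}(T) = e^{-\vp(T,\cdot)/h}(\partial_t\vec{u}(T) - \vec{u}(T)/h)$ and $\nabla_x\vec{v}(T) = e^{-\vp(T,\cdot)/h}(\nabla_x\vec{u}(T) - \omega\vec{u}(T)/h)$, combined with one last application of Young's inequality, trade the $h\lVert\partial_t\vec{v}(T)\rVert^2$ on the LHS and the $h\lVert\nabla_x\vec{v}(T)\rVert^2$ on the RHS for the desired weighted $\vec{u}$-quantities, at the price of an additional $h^{-1}\lVert e^{-\vp(T,\cdot)/h}\vec{u}(T)\rVert^2$ on the RHS --- exactly the extra term appearing in the statement. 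The main obstacle I anticipate is purely bookkeeping: the IBP generates several boundary and final-time contributions whose signs and $h$-powers must be arranged very carefully to align with the statement; the one genuinely analytic step is the transport estimate $\lVert\vec{v}\rVert^2\le T^2\lVert X\vec{v}\rVert^2$, which hinges on the geometric fact that backward characteristics of $X$ exit precisely through the portion of $\partial Q$ on which $\vec{v}$ is known to vanish.
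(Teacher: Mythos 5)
Your proposal is correct and follows essentially the same route as the paper: conjugate with $e^{-\vp/h}$, split the conjugated operator into $h\Box$ plus the transport part $X=\partial_t-\omega\cdot\nabla_x$, discard the square of the second-order piece, compute the cross term by integration by parts (your formula for it agrees exactly with the paper's $I_{2,j}+I_{3,j}$, including the $t=T$ cross term handled by Young and the $\int_\Sigma(\omega\cdot\nu)|\partial_\nu\vec v|^2$ term split by sign), control $\lVert\vec v\rVert^2$ by $\lVert X\vec v\rVert^2$, absorb the potential for $h$ small, and convert back to $\vec u$ picking up the $h^{-1}\lVert e^{-\vp(T,\cdot)/h}\vec u(T,\cdot)\rVert^2$ term. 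The only (immaterial) deviation is your proof of the transport Poincar\'e inequality by integrating along characteristics of $X$; the paper gets the same bound from the identity $2\int_0^s\int_\Omega (Xv_j)\,v_j\,\D x\,\D t=\int_\Omega|v_j(s,\cdot)|^2\,\D x$ followed by Cauchy--Schwarz and integration in $s$.
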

	\begin{proof}
	Define the conjugated operator  $\Box_{\vp}$ by 
		\begin{equation}\label{Conjugated Box}
		\Box_{\vp}:=h^{2}e^{-\vp/h}\Box e^{\vp/h}.
		\end{equation}
		For $\vec{v}\in \textbf{C}^{2}(Q)$, we have 
			\[
		\Box_{\vp}\vec{v}(t,x)= h^{2}\Box \vec{v}(t,x) +2h\lb  \PD_{t}-\omega\cdot \n_{x}\rb \vec{v}(t,x):=P_{1}\vec{v}(t,x)+P_{2}\vec{v}(t,x)
		\]
		where 
			\begin{align*}
		\begin{aligned}
		P_{1}\vec{v}(t,x)=h^{2}\Box   \vec{v}(t,x)\ \ 
		\text{and} \ \ 
		P_{2}\vec{v}(t,x)=2h\lb  \PD_{t}-\omega\cdot \n_{x}\rb \vec{v}(t,x).
		\end{aligned}
		\end{align*}
	Now $\Lbf$ norm of $\Box_{\vp}\vec{v}$ for $\vec{v}\in \textbf{C}^{2}(Q)$ satisfying $\vec{v}|_{\Sigma}=\vec{v}|_{t=0}=\PD_{t}\vec{v}|_{t=0}=0$, can be estimated as 
		\begin{align*}
		\begin{aligned}
		\int\limits_{Q}\lvert \Box_{\vp}\vec{v}(t,x)\rvert^{2}\D x \D t&=\int\limits_{Q}\lvert P_{1}\vec{v}(t,x)\rvert^{2}\D x\D t+\int\limits_{Q}\lvert P_{2}\vec{v}(t,x)\rvert^{2}\D x\D t + 2\int\limits_{Q} \mbox{Re}\lb P_{1}\vec{v}(t,x)\cdot\overline{P_{2}\vec{v}(t,x)} \rb \D x \D t\\
		&\geq \int\limits_{Q}\lvert P_{2}\vec{v}(t,x)\rvert^{2}\D x\D t + 2\int\limits_{Q} \mbox{Re}\lb P_{1}\vec{v}(t,x)\cdot\overline{P_{2}\vec{v}(t,x)} \rb \D x \D t\\
		&=4h^{2}\int\limits_{Q}\lvert\left(\partial_{t}-\omega\cdot\n_{x}\right)\vec{v}(t,x)\rvert^{2}\D x\D t+4h^{3}\int\limits_{Q}\mbox{Re}\lb \Box \vec{v}(t,x)\cdot\overline{\partial_{t}\vec{v}(t,x)}\rb \D x \D t\\
		&\quad \quad -4h^{3}\int\limits_{Q}\mbox{Re}\lb \Box \vec{v}(t,x)\cdot\lb\omega\cdot\overline{\nabla_{x}\vec{v}(t,x)}\rb\rb \D x \D t\\
		&=4h^{2}\sum_{j=1}^{n}\int\limits_{Q}\lvert\left(\partial_{t}-\omega\cdot\n_{x}\right)v_{j}(t,x)\rvert^{2}\D x\D t+4h^{3}\sum_{j=1}^{n}\int\limits_{Q}\mbox{Re}\lb \Box v_{j}(t,x)\overline{\partial_{t}v_{j}(t,x)}\rb \D x \D t\\
		&\quad \quad -4h^{3}\sum_{j=1}^{n}\int\limits_{Q}\mbox{Re}\lb \Box v_{j}(t,x)\lb\omega\cdot\overline{\nabla_{x}v_{j}(t,x)}\rb\rb \D x \D t\\
		&:=\sum_{j=1}^{n}\lb I_{1,j}+I_{2,j}+I_{3,j}\rb,
		\end{aligned}
		\end{align*}
		where 
		\begin{align}\label{Definition of I1j I2j I3j}
		\begin{aligned}
		&I_{1,j}:=4h^{2}\int\limits_{Q}\lvert\left(\partial_{t}-\omega\cdot\n_{x}\right)v_{j}(t,x)\rvert^{2}\D x\D t\\
		&I_{2,j}:=4h^{3}\int\limits_{Q}\mbox{Re}\lb \Box v_{j}(t,x)\overline{\partial_{t}v_{j}(t,x)}\rb \D x \D t\\
		&I_{3,j}:=-4h^{3}\int\limits_{Q}\mbox{Re}\lb \Box v_{j}(t,x)\lb\omega\cdot\overline{\nabla_{x}v_{j}(t,x)}\rb\rb \D x \D t.
		\end{aligned}
		\end{align}
		We will estimate each of $I_{k,j}$ for $1\leq k\leq 3$ and each fixed $1\leq j\leq n$.
		We first simplify $I_{1,j}$. To estimate $I_{1,j}$, first consider the following integral for $0\leq s\leq T$
		\begin{align*}
		\begin{aligned}
		2\int\limits_{0}^{s}\int\limits_{\Omega}\left(\partial_{t}v_{j}(t,x)-\omega\cdot\n_{x}v_{j}(t,x)\right)v_{j}(t,x)\D x\D t=\int\limits_{\Omega}\lvert v_{j}(s,x)\rvert^{2}\D x-\int\limits_{0}^{s}\int\limits_{\Omega}\n_{x}\cdot\left(\lvert v_{j}(t,x)\rvert^{2}\omega\right)\D x\D t.
		\end{aligned}
		\end{align*}
		Now using Cauchy-Schwartz inequality on left hand side of the above equation and the fact that $v_{j}(t,x)|_{\Sigma}=0$, we have 
		\begin{align}\label{Equation after Cauchy Schwartz}
		\begin{aligned}
		\int\limits_{\Omega}\lvert v_{j}(s,x)\rvert^{2}\D x \leq \frac{1}{\epsilon^{2}}\int\limits_{0}^{s}\int\limits_{\Omega}\lvert\left(\partial_{t}-\omega\cdot\n_{x}\right)v_{j}(t,x)\rvert^{2}\D x\D t +\epsilon^{2}\int\limits_{0}^{s}\int\limits_{\Omega}\lvert v_{j}(t,x)\rvert^{2}\D x\D t
		\end{aligned}
		\end{align} 
		holds for any $\epsilon>0$. Now integrating both sides of \eqref{Equation after Cauchy Schwartz} with respect to  $s$ variable from $0$ to $T$, we have
		\begin{align*}
		\begin{aligned}
		\int\limits_{0}^{T}\int\limits_{\Omega}\lvert v_{j}(s,x)\rvert^{2}\D x \D s\leq \frac{T}{\epsilon^{2}}\int\limits_{0}^{T}\int\limits_{\Omega}\lvert\left(\partial_{t}-\omega\cdot\n_{x}\right)v_{j}(t,x)\rvert^{2}\D x\D t +T\epsilon^{2}\int\limits_{0}^{T}\int\limits_{\Omega}\lvert v_{j}(t,x)\rvert^{2}\D x\D t.
		\end{aligned} 
		\end{align*}
		Now choose $\epsilon>0$, small enough such that $1-T\epsilon^{2}>0$, we get 
		\begin{align}
		\begin{aligned}
		4Ch^{2}\int\limits_{0}^{T}\int\limits_{\Omega}\lvert v_{j}(t,x)\rvert^{2}\D x\D t \leq I_{1,j}
		\end{aligned}
		\end{align}
		where $C>0$ is some constant depending only on $T$.
		Next  using the integration by parts and the fact that 
		$v_{j}|_{\Sigma}=v_{j}|_{t=0}=\PD_{t}v_{j}|_{t=0}=0$,
		we have $I_{2,j}$ is 
		\begin{align*} 
		I_{2,j}&=4h^{3}\int\limits_{Q}\mbox{Re}\lb \Box v_{j}(t,x)\overline{\partial_{t}v_{j}(t,x)}\rb \D x \D t\\
		&=2h^{3}\int\limits_{Q} \frac{\PD}{\PD t} |\PD_{t} v_{j}(t,x)|^{2} \D x \D t -4h^{3}\int\limits_{Q}\mbox{Re}\lb\Delta v_{j}(t,x) \overline{\partial_{t}v_{j}(t,x)}\rb\D x \D t\\
		&= 2h^{3} \int\limits_{\Omega} \lb |\PD_{t}v_{j}(T,x)|^{2} +|\n_{x}v_{j}(T,x)|^{2} \rb \D x.
		\end{align*}
		Finally, we consider $I_{3,j}$. This is 
		\[
		I_{3,j}=-4h^{3} \int\limits_{Q}\mbox{Re}\lb \Box  v_{j}(t,x) \omega\cdot \overline{\n_{x}v_{j}(t,x)}\rb \D x \D t.
		\]
		We have 
		\begin{align*}
		I_{3,j}&=-4h^{3}\mbox{Re}\int\limits_{Q}\PD_{t}^{2}v_{j}(t,x)\overline{\omega\cdot \n_{x}v_{j}(t,x)} \D x \D t + 4h^{3}\mbox{Re}\int\limits_{Q}\Delta v_{j}(t,x) \overline{\omega\cdot \n_{x}v_{j}(t,x)} \D x \D t\\
		&=-4h^{3} \mbox{Re}\int\limits_{Q} \PD_{t}\lb \PD_{t} v_{j}(t,x)\overline{\omega\cdot \n_{x}v_{j}(t,x)} \rb \D x \D t+4h^{3}\mbox{Re}\int\limits_{Q} \PD_{t}v_{j}(t,x) \overline{\omega\cdot \n_{x}\PD_{t}v_{j}(t,x)} \D x \D t\\
		&\quad\quad +4h^{3}\mbox{Re}\int\limits_{Q}\n_{x}\cdot \lb \n_{x}v_{j}(t,x) \overline{\omega\cdot \n_{x} v_{j}(t,x)}\rb \D x \D t-4h^{3}\mbox{Re}\int\limits_{Q} \n_{x}v_{j}(t,x)\cdot \n_{x} \lb \overline{\omega\cdot \n_{x}v_{j}(t,x)}\rb \D x \D t\\
		&=-4h^{3}\mbox{Re}\int\limits_{\Omega} \PD_{t} v_{j}(T,x) \overline{\omega\cdot \n_{x}v_{j}(T,x)} \D x +2h^{3} \int\limits_{Q}\n_{x}\cdot \lb \omega |\PD_{t}v_{j}(t,x)|^{2}\rb \D x \D t\\
		&\quad \quad +2h^{3}\mbox{Re} \int\limits_{\Sigma} \PD_{\nu} v_{j}(t,x) \overline{\omega\cdot \n_{x}v_{j}(t,x)} \D S_{x} \D t -2h^{3} \int\limits_{Q}\n_{x} \cdot \lb \omega |\n_{x}v_{j}|^{2}\rb \D x \D t\\
		&=-4h^{3}\mbox{Re}\int\limits_{\Omega} \PD_{t} v_{j}(T,x) \overline{\omega\cdot \n_{x}v_{j}(T,x)} \D x  +2h^{3}\int\limits_{\Sigma} \omega \cdot \nu |\PD_{\nu}v_{j}|^{2} \D S_{x} \D t.
		\end{align*}
		In deriving the above equation, we used the fact that 
		\[
		2h^{3}\mbox{Re} \int\limits_{\Sigma} \PD_{\nu} v_{j}(t,x) \overline{\omega\cdot \n_{x}v_{j}(t,x)} \D S_{x} \D t= 2h^{3}\int\limits_{\Sigma} \omega\cdot \nu |\PD_{\nu} v_{j}|^{2} \D S_{x} \D t,
		\]
		since $v_{j}=0$ on $\Sigma$. Also note that $\partial_{t}v_{j}(t,x)=0$ and $|\n_{x} v_{j}|=|\PD_{\nu}v_{j}|$ on $\Sigma$.

		Therefore 
		\begin{align*}
		\int\limits_{Q} |\Box_{\vp}v_{j}(t,x)|^{2}\D x \D t &\geq 4Ch^{2}\int\limits_{0}^{T}\int\limits_{\Omega}\lvert v_{j}(t,x)\rvert^{2}+2h^{3} \int\limits_{\Omega} \lb |\PD_{t}v_{j}(T,x)|^{2} +|\n_{x}v_{j}(T,x)|^{2} \rb \D x\\
		&\quad\quad-4h^{3}\mbox{Re}\int\limits_{\Omega} \PD_{t} v_{j}(T,x) \overline{\omega\cdot \n_{x}v_{j}(T,x)} \D x +2h^{3}\int\limits_{\Sigma} \omega \cdot \nu |\PD_{\nu}v_{j}|^{2} \D S_{x} \D t.
		\end{align*}
		After using the Cauchy-Schwartz inequality to estimate third term, we get
		\begin{align}\label{Estimate for box phi}
		\begin{aligned}
		C\Big(h^{2}\int\limits_{Q}\lvert \vec{v}(t,x)\rvert^{2}+h^{3} \int\limits_{\Omega}  |\PD_{t}\vec{v}(T,x)|^{2}\D x -4h^{3}\int\limits_{\Omega} |\n_{x}\vec{v}(T,x)|^{2}  \D x\\+2h^{3}\int\limits_{\Sigma} \omega \cdot \nu |\PD_{\nu}\vec{v}|^{2} \D S_{x} \D t\Big)\leq C\int\limits_{Q}|\Box_{\vp}\vec{v}(t,x)|^{2}\D x \D t.
		\end{aligned}
		\end{align}
		Now we consider the conjugated operator $\Lc_{\vp}:=h^{2} e^{-\frac{\vp}{h}}\Lc_{q} e^{\frac{\vp}{h}}$. We have  
		\[
		\Lc_{\vp}\vec{v}(t,x)=h^{2}\lb e^{-\vp/h}\left(\Box+q\right) e^{\vp/h}\vec{v}(t,x)\rb=\Box_{\vp}\vec{v}(t,x)+h^{2}q(t,x)\vec{v}(t,x).
		\]
		By triangle inequality,
		\begin{equation}\label{Full conjugated}
		\int\limits_{Q}\left|\Lc_{\vp}\vec{v}(t,x)\right|^{2}\D x \D t\geq \frac{1}{2}\int\limits_{Q}|\Box_{\vp}\vec{v}(t,x)|^{2}\D x \D t-h^{4}\int\limits_{Q}|q(t,x)\vec{v}(t,x)|^{2}\D x \D t.
		\end{equation}
	We have 
		\begin{align*}
		&h^{4}\int\limits_{Q}\left|q(t,x)\vec{v}(t,x)\right|^{2}\D x \D t\leq Ch^{4}\int\limits_{Q}\lvert \vec{v}(t,x)\rvert^{2}\D x\D t
		\end{align*}
		where constant $C>0$ depends on $\lVert q\rVert_{L^{\infty}(Q)}$.
		Using this together with Equation \eqref{Estimate for box phi} in \eqref{Full conjugated}, we have that there exists a constant $C>0$ depending only on $T$, $\Omega$ and $q$ such that
		\begin{align*}
		\begin{aligned}
		C\left(h^{2}\int\limits_{Q}\lvert \vec{v}(t,x)\rvert^{2}+
		h^{3} \int\limits_{\Omega}  |\PD_{t}\vec{v}(T,x)|^{2}\D x +2h^{3}\int\limits_{\Sigma} \omega \cdot \nu |\PD_{\nu}\vec{v}|^{2} \D S_{x} \D t \right)\\\leq \int\limits_{Q}\left|\Lc_{\vp}\vec{v}(t,x)\right|^{2}\D x \D t +4h^{3}\int\limits_{\Omega} |\n_{x}\vec{v}(T,x)|^{2}  \D x
		\end{aligned}
		\end{align*}
		and this inequality holds for $h$ small enough. After dividing by $h^{2}$, we get
		\begin{align}\label{Estimate for conjugated operator final}
		\begin{aligned}
		C\left(\int\limits_{Q}\lvert \vec{v}(t,x)\rvert^{2}+
		h \int\limits_{\Omega}  |\PD_{t}\vec{v}(T,x)|^{2}\D x +2h\int\limits_{\Sigma} \omega \cdot \nu |\PD_{\nu}\vec{v}|^{2} \D S_{x} \D t \right)\\\leq \frac{1}{h^{2}}\int\limits_{Q}\left|\Lc_{\vp}\vec{v}(t,x)\right|^{2}\D x \D t +4h\int\limits_{\Omega} |\n_{x}\vec{v}(T,x)|^{2}  \D x.
		\end{aligned}
		\end{align}
		Let us now substitute $\vec{v}(t,x)=e^{-\frac{\vp}{h}} \vec{u}(t,x)$. We have 
		\begin{align*}
		&he^{-\vp/h}\partial_{t}u_{j}(t,x)=h\partial_{t}v_{j}+e^{-\vp/h}u_{j},\\
		&he^{-\vp/h}\nabla_{x}u_{j}=h\nabla_{x}v_{j}+e^{-\vp/h}\omega u_{j},\\
		&\partial_{\nu}v_{j}(t,x)|_{\Sigma}=e^{-\vp/h}\partial_{\nu}u_{j}|_{\Sigma}, \quad \mbox{since } u_{j}=0 \mbox{ on } \Sigma.
		\end{align*}
		Using the triangle inequality, we have 
		\begin{align*}
		\begin{aligned}
		&h\int\limits_{\Omega}e^{-2\vp(T,x)/h}\lvert\PD_{t} u_{j}(T,x)\rvert^{2}\D x-\frac{1}{h}\int\limits_{\Omega}e^{-2\vp(T,x)/h}\lvert u_{j}(T,x)\rvert^{2}\D x\leq C h\int\limits_{\Omega}e^{-2\vp(T,x)/h}\lvert\PD_{t} v_{j}(T,x)\rvert^{2}\D x\\
		&h\int\limits_{\Omega}\lvert\n_{x}v_{j}(T,x)\rvert^{2}\D x\leq  C\lb h\int\limits_{\Omega}e^{-2\vp(T,x)/h}\lvert\n_{x} u_{j}(T,x)\rvert^{2}\D x +\frac{1}{h}\int\limits_{\Omega}e^{-2\vp(T,x)/h}\lvert u_{j}(T,x)\rvert^{2}\D x\rb.
		\end{aligned}
		\end{align*}
		Using the above  inequalities and choosing $h$ small enough, we have 
		\begin{align*}
		\begin{aligned}
		&\int\limits_{Q}e^{-2\vp/h}\lvert \vec{u}(t,x)\rvert^{2}\D x\D t+h\int\limits_{\Omega}e^{-2\vp(T,x)/h}\lvert\PD_{t} u_{j}(T,x)\rvert^{2}\D x+2h\int\limits_{\Sigma}\omega\cdot\nu(x)e^{-2\vp/h}\lvert\vec{u}(t,x)\rvert^{2}\D S_{x}\D t\\
		&\leq C\left(h^{2}\int\limits_{Q}e^{-2\vp/h}\lvert\Lc_{q}\vec{u}(t,x)\rvert^{2}\D x\D t +h\int\limits_{\Omega}e^{-2\vp(T,x)/h}\lvert\n_{x}\vec{u}(T,x)\rvert^{2}\D x +\frac{1}{h}\int\limits_{\Omega}e^{-2\vp(T,x)/h}\lvert\vec{u}(T,x)\rvert^{2}\D x\right).
		\end{aligned}
		\end{align*}
		Finally,
		\begin{align*}
		\begin{aligned}
		&\lVert e^{-\vp/h}\vec{u}\rVert^{2}_{\Lbf(Q)}+h\lb  e^{-\vp/h}\partial_{\nu}\vp\partial_{\nu}\vec{u},e^{-\phi/h}\partial_{\nu}\vec{u}\rb_{\Lbf(\Sigma_{+,\omega})}+h\lb e^{-\vp(T,\cdot)/h}\partial_{t}\vec{u}(T,\cdot),e^{-\vp(T,\cdot)/h}\partial_{t}\vec{u}(T,\cdot)\rb_{\Lbf(\Omega)}\\
		&\leq C\Bigg(\lVert he^{-\vp/h}\Lc_{q} \vec{u}\rVert^{2}_{\Lbf(Q)}+\frac{1}{h}\lb e^{-\vp(T,\cdot)/h}u(T,\cdot),e^{-\vp(T,\cdot)/h}\vec{u}(T,\cdot)\rb_{\Lbf(\Omega)}\\
		&\quad \quad+h\lb e^{-\vp(T,\cdot)/h}\nabla_{x}\vec{u}(T,\cdot),e^{-\vp(T,\cdot)/h}\nabla_{x}\vec{u}(T,\cdot)\rb_{\Lbf(\Omega)}+h\lb  e^{-\vp/h}\lb-\partial_{\nu}\vp\rb\partial_{\nu}\vec{u},e^{-\vp/h}\partial_{\nu}\vec{u}\rb_{\Lbf(\Sigma_{-,\omega})}\Bigg).
		\end{aligned}
		\end{align*}
		This completes the proof.
	\end{proof}

	\section{Construction of Geometric optics solutions}\label{GO solutions}
	Aim of this section is to construct exponential growing and decaying solutions which will be used to prove the main result of this article. To  construct these solutions we follow very closely the ideas from \cite{Kian_Stability_potential_partial_data,Kian}  used for constructing the geometric optics solutions for the wave equation with a scalar potential. We state the following lemma which will be used for constructing the solutions. Proof of this is given in  \cite{Kian_Stability_potential_partial_data}.
	\begin{lemma}\label{Existence lemma decaying}\cite{Kian}
		Let  $\Box_{\pm\vp}$ be as defined in \eqref{Conjugated Box}, then for each $0<h<1$  there exists a bounded linear operator $\Box_{\pm\vp}^{*} : H^{1}(Q) \rightarrow H^{1}(Q)$ such that
		\begin{enumerate}
			\item $\Box_{\pm\vp}^{*}\lb\Box_{\pm\vp}f\rb=f, \ f \in H^{1}(Q)$
			\item $\lVert\Box^{*}_{\pm\vp}\rVert_{\Bc(L^{2}(Q))}\leq C$
			\item $\Box^{*}_{\pm\vp}\in\Bc\lb H^{1}(Q);H^{2}(Q)\rb \ \mbox{and}\ \lVert\Box^{*}_{\pm\vp}\rVert_{\Bc\lb H^{1}(Q);H^{2}(Q)\rb}\leq C$
		\end{enumerate}
	for some constant $C>0$ depending only on $Q$.
%		\[\Box_{\pm\vp}^{*}\lb\Box_{\pm\vp}f\rb=f, \ f \in H^{1}(Q) \mbox{and} \ \lVert\Box^{*}_{\pm\vp}\rVert_{\Bc(L^{2}(Q))}\leq Ch.\]
%		%\begin{enumerate}
%		%	\item[(1)] $P_{-h,\o}P^*_{h , \o} f  = f$, \  $f \in L^2(Q)$ 
%		%	\item[(2)] $|| P^*_{h,\o}|| \leq Ch$.
%		%\end{enumerate}
	\end{lemma}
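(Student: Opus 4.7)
The plan is to construct $\Box_{\pm\vp}^{*}$ by a standard duality argument based on the Carleman estimate of Theorem \ref{Boundary Carleman estimate}, and then to lift the regularity from $L^{2}$ to $H^{2}$ by commuting $\Box_{\pm\vp}$ with tangential derivatives and re-applying the estimate. Throughout I will focus on the "$+\vp$" case; the other sign is symmetric.

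First I would rewrite the Carleman estimate in the conjugated form. Setting $\vec{v}=e^{-\vp/h}\vec{u}$ in \eqref{Boundary carleman estimate} converts it into an inequality of the form
\begin{equation*}
\lVert \vec{v}\rVert_{\Lbf(Q)}^{2}
\;\leq\; C\Bigl(\lVert \Box_{-\vp}\vec{v}\rVert_{\Lbf(Q)}^{2}
+ \text{boundary and final-time terms on } \vec{v}\Bigr)
\end{equation*}
for scalar wave data (the matrix potential $q$ enters only as a lower-order perturbation absorbed for small $h$, exactly as in \eqref{Full conjugated}, but here we need only the principal-part version, i.e.\ $q=0$). Restricting to the closed subspace
\begin{equation*}
\Dc_{-}:=\bigl\{\vec{v}\in \textbf{C}^{\,2}(\overline{Q})\st \vec{v}|_{\Sigma}=0,\ \vec{v}|_{t=T}=\partial_{t}\vec{v}|_{t=T}=0\bigr\},
\end{equation*}
all the boundary contributions on $\Sigma_{-,\omega}$ and at $t=T$ vanish, and one obtains a clean estimate
\begin{equation*}
\lVert \vec{v}\rVert_{\Lbf(Q)}\;\leq\; C\,\lVert \Box_{-\vp}\vec{v}\rVert_{\Lbf(Q)},\qquad \vec{v}\in \Dc_{-}.
\end{equation*}

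Next I would use this as a solvability statement by Hahn--Banach/Riesz. Given $f\in L^{2}(Q)$, define a linear functional $\ell$ on the range $\{\Box_{-\vp}\vec{v}:\vec{v}\in\Dc_{-}\}\subset L^{2}(Q)$ by $\ell(\Box_{-\vp}\vec{v})=(f,\vec{v})_{L^{2}(Q)}$. The a priori estimate makes $\ell$ well defined and continuous with norm $\leq C\lVert f\rVert_{L^{2}(Q)}$; Hahn--Banach extends $\ell$ to $L^{2}(Q)$, and Riesz represents the extension by some $u\in L^{2}(Q)$ with $\lVert u\rVert_{L^{2}(Q)}\leq C\lVert f\rVert_{L^{2}(Q)}$. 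Because $\Box_{-\vp}$ is the formal adjoint of $\Box_{+\vp}$ (up to the explicit first-order pieces coming from $\vp=t+x\cdot\omega$, which behave well since $\nabla_{t,x}\vp$ is constant), one verifies $\Box_{+\vp}u=f$ in the distributional sense. Defining $\Box_{+\vp}^{*}f:=u$ yields a bounded linear map on $L^{2}(Q)$ that inverts $\Box_{+\vp}$ from the left on $\Dc_{-}$, giving properties (1) and (2).

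Finally, for the regularity statement (3), I would exploit the translational invariance of $\vp$. The principal part $\Box$ commutes with $\partial_{t}$ and with any constant-coefficient spatial derivative $\partial_{x_{j}}$, and since $\vp$ is affine in $(t,x)$ the conjugated operator $\Box_{+\vp}$ differs from $\Box$ only by first-order terms with constant coefficients; hence $[\Box_{+\vp},\partial_{t}]=[\Box_{+\vp},\partial_{x_{j}}]=0$. Given $f\in H^{1}(Q)$, applying $\partial_{t}$ and $\partial_{x_{j}}$ to the equation $\Box_{+\vp}u=f$ and invoking the $L^{2}$ bound just obtained produces $L^{2}$ control of all first-order derivatives of $u$. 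A second application of the same commutation trick, combined with using the equation $\Box_{+\vp}u=f$ to trade the second normal derivative for $\partial_{t}^{2}u$ and tangential derivatives, upgrades this to the $H^{2}$ bound $\lVert u\rVert_{H^{2}(Q)}\leq C\lVert f\rVert_{H^{1}(Q)}$.

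The main obstacle, I expect, will be the boundary bookkeeping in the duality step: one has to choose the test-function space $\Dc_{-}$ and the boundary/trace conditions imposed on $u$ so that the adjoint pairing is consistent and the Carleman estimate actually kills every uncontrolled term in \eqref{Boundary carleman estimate} (in particular the sign-indefinite Neumann term on $\Sigma_{-,\omega}$ and the final-time gradient term). Handling this with care, together with the commutator argument for the higher regularity near the lateral boundary $\Sigma$, is where the real work lies.
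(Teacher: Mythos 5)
First, a point of comparison: the paper does not actually prove this lemma --- it is imported verbatim from \cite{Kian}, and the text just before the statement defers the proof to \cite{Kian_Stability_potential_partial_data}. In that source the operator is not built by duality but from H\"ormander's theorem on fundamental solutions of constant-coefficient operators: one divides by the full symbol of $\Box_{\pm\vp}$ on the Fourier side (after extending the data off $Q$), and the mapping properties then come for free because the resulting right inverse is a Fourier multiplier that commutes with every $\partial_t$ and $\partial_{x_j}$. Your duality route via the Carleman estimate is a legitimate alternative for producing an $L^2$-bounded right inverse, and your worry about the boundary bookkeeping is actually moot: for the Hahn--Banach step it suffices to test against $\vec{v}\in \textbf{C}^{\infty}_{c}(Q)$, for which every boundary and final-time term in \eqref{Boundary carleman estimate} vanishes identically, and one still concludes $\Box_{+\vp}u=f$ in $\mathcal{D}'(Q)$, which is all the lemma asserts.

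The genuine gap is in your step (3). Having produced $u\in L^{2}(Q)$ with $\Box_{+\vp}u=f$ only in the sense of distributions via Hahn--Banach/Riesz, you cannot ``apply $\partial_{x_j}$ to the equation and invoke the $L^{2}$ bound'': the equation $\Box_{+\vp}w=\partial_{x_j}f$ has a large kernel, your construction selects one particular solution of it, and there is no reason that solution coincides with $\partial_{x_j}u$; since $\Box$ is not hypoelliptic, $u$ inherits no interior regularity from $f$. The commutation argument is only valid when the inverse itself commutes with derivatives, which is exactly what the Fourier-multiplier construction of \cite{Kian_Stability_potential_partial_data} provides and what an abstract Hahn--Banach extension does not. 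Likewise, ``trading the second normal derivative for $\partial_t^2u$ and tangential derivatives'' via the equation controls $\Delta_x u$, not each individual second derivative, so it cannot by itself yield the full $\Hbf^{2}$ bound. Two further cautions: your construction naturally gives the right-inverse identity $\Box_{+\vp}\lb\Box_{+\vp}^{*}f\rb=f$ (which is what is actually used in Proposition \ref{Decaying solution}); the left-inverse identity (1) as printed cannot hold for any linear operator, since $e^{\vp}\in \ker\Box_{+\vp}\cap H^{1}(Q)$. And if you track powers of $h$ through \eqref{Estimate for conjugated operator final} with the normalization \eqref{Conjugated Box}, the duality argument yields $\lVert\Box_{\pm\vp}^{*}\rVert_{\Bc(L^{2}(Q))}\leq C/h$ rather than a constant independent of $h$, so the $h$-dependence must be carried explicitly rather than suppressed.
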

	Using Lemma \ref{Existence lemma decaying} in the following Proposition, we  construct the exponential decaying solution for $\Lc_{q^*}v=0$. 
	\begin{proposition}\label{Decaying solution}
		 	Let $q$ and $\varphi$ be as in Theorem \ref{Boundary Carleman estimate}. Then, there exists an $h_{0}>0$ such that for all $0<h\leq h_{0}$, we can find $\vec{v}_d \in \Hbf^{2}(Q)$ satisfying $\Lc_{q^{*}}\vec{v}_d=0$ of the form 
			\begin{equation}\label{Go decaying}
			\vec{v}_{d}(t,x)= e^{-\frac{\vp}{h}}\lb \vec{B}_{d}(t,x) + h\vec{R}_{d}(t,x;h)\rb,
			\end{equation}
			where 
			\begin{equation}\label{Expression for Bd}
			\vec{B}_{d}(t,x)=e^{-i\zeta\cdot(t,x)}\vec{K}_{1}
			\end{equation}
			with $\zeta\in(1,-\omega)^{\perp}$, $\vec{K}_{1}$ is a constant $n$-vector and $\vec{R}_{d}\in \Hbf^{2}(Q)$ satisfies
			\begin{equation}\label{estimate for Remainder term1}
			\lVert \vec{R}_{d}\rVert_{\Lbf(Q)}\leq C .
			\end{equation}
				\begin{proof}
				We have \begin{align*}
				\begin{aligned}
				\Lc_{q^{*}}\vec{v}(t,x)
				=
				\begin{bmatrix}\vspace*{2mm}
				\Box{v}_{1}(t,x)+\sum_{j=1}^{n}\overline{q}_{j1}(t,x)v_{j}(t,x)\\
				\Box{v}_{2}(t,x)+\sum_{j=1}^{n}\overline{q}_{j2}(t,x)v_{j}(t,x)\\
				\vdots\\
				\Box{v}_{n}(t,x)+\sum_{j=1}^{n}\overline{q}_{jn}(t,x)v_{j}(t,x)
				\end{bmatrix}
				\end{aligned}
				\end{align*}
				and we are looking for $\vec{v}_{d}(t,x)$ of the form  \eqref{Go decaying} such that 
				\[\Lc_{q^{*}}\vec{v}_{d}(t,x)=0.\]
				Thus, we have 
				\begin{align}\label{Equation for vdi}
				\square v_{di}(t, x)+ \sum_{j=1}^n \overline{q}_{ji}(t,x) v_{dj}(t,x) &=0, \ \mbox{for $1 \leq i \leq n$}
				\end{align}
				where $v_{di}$ stands for the  $i$th component of $\vec{v}_{d}$. Also we denote  by $B_{di}$  and $R_{di}$ as the $i$th component of $\vec{B}_{d}$ and $\vec{R}_{d}$ respectively. Now using the expressions for $v_{di}$ from \eqref{Go decaying} in \eqref{Equation for vdi}, we have 
				\begin{align*}
				\begin{aligned}
				h^{2}\Box R_{di}-2h\lb \PD_{t}-\omega\cdot\n_{x}\rb R_{di}+h^{2}\sum_{j=1}^{n}\overline{q}_{ji}R_{dj}=-h\Box B_{di} -h\sum_{j=1}^{n}\overline{q}_{ji}B_{dj}
				\end{aligned}
				\end{align*}
				holds for $1\leq i\leq n$. Using Equation \eqref{Conjugated Box}, we have 
				\begin{align}\label{Equation for Rd}
				\begin{aligned}
				\Box_{-\vp}\vec{R}_{d}(t,x)=-h\Lc_{q^{*}}\vec{B}_{d}(t,x)-h^{2}q^{*}(t,x)\vec{R}_{d}(t,x).
				\end{aligned}
				\end{align} 
				Now for $\vec{w}\in \Hbf^{1}(Q)$, we define the map $\Fc: \Hbf^{1}(Q)\rightarrow \Hbf^{1}(Q)$ by 
				\[\Fc(\vec{w}):=\Box_{-\vp}^{*}\lb -h\Lc_{q^{*}}\vec{B}_{d}-h^{2}q^{*}\vec{w}\rb.\]
				which is well-defined from  Lemma \ref{Existence lemma decaying} and the fact that $q\in W^{1,\infty}(Q)$. Now using Lemma  \ref{Existence lemma decaying},  we have 
				\[ \lVert \Fc(\vec{w}_{1})-\Fc(\vec{w}_{2})\rVert_{\Hbf^{1}(Q)}=h^{2}\Big\lVert \Box_{-\vp}^{*}\lb q^{*}\left\{\vec{w}_{1}-\vec{w}_{2}\right\}\rb\Big\rVert_{\Hbf^{1}(Q)}\leq Ch^{2}\lVert \vec{w}_{1}-\vec{w}_{2}\rVert_{\Hbf^{1}(Q)}\]
				for some constant $C>0$ independent of $\vec{w}_{i}$ and $h$. Now choosing $h>0$ small enough such that $Ch^{2}<1$, we have by fixed point theorem, there exists $\vec{w}\in \Hbf^{1}(Q)$ such that $\Fc(\vec{w})=\vec{w}$. Now going back to Equation \eqref{Equation for Rd}  and using Lemma \ref{Existence lemma decaying},  we have $\vec{R}_{d}\in\Hbf^{2}(Q)$ and $\lVert\vec{R}_{d}\rVert_{\Lbf(Q)}\leq C$. 
				This completes the proof of Proposition \ref{Decaying solution}.
			\end{proof}
		\end{proposition}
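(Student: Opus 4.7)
The plan is to plug the ansatz \eqref{Go decaying} directly into $\Lc_{q^{*}}\vec{v}_{d}=0$, isolate an equation for the remainder $\vec{R}_{d}$, and then solve it by a Banach fixed-point argument using the right inverse $\Box_{-\vp}^{*}$ provided by Lemma \ref{Existence lemma decaying}. The key algebraic fact I will exploit is that, because $\vp(t,x)=t+x\cdot\omega$ satisfies $(\partial_{t}\vp)^{2}-|\nabla_{x}\vp|^{2}=0$ and $\Box\vp=0$, conjugation by $e^{-\vp/h}$ gives the clean formula
\begin{equation*}
\Box(e^{-\vp/h}\vec{f})=e^{-\vp/h}\Big(\Box \vec{f}-\tfrac{2}{h}(\partial_{t}-\omega\cdot\nabla_{x})\vec{f}\,\Big).
\end{equation*}
Moreover, since $\zeta \in (1,-\omega)^{\perp}$ implies $(\partial_{t}-\omega\cdot\nabla_{x})\vec{B}_{d}=0$, the prospective $1/h$ singular term from $\vec{B}_{d}$ drops out, and $\Box(e^{-\vp/h}\vec{B}_{d})=e^{-\vp/h}\Box\vec{B}_{d}$.

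Inserting \eqref{Go decaying} into $\Lc_{q^{*}}\vec{v}_{d}=0$, applying the two identities above (the second applied to $h\vec{R}_{d}$), and multiplying through by $h$, I arrive at
\begin{equation*}
\Box_{-\vp}\vec{R}_{d}(t,x)=-h\,\Lc_{q^{*}}\vec{B}_{d}(t,x)-h^{2}q^{*}(t,x)\vec{R}_{d}(t,x),
\end{equation*}
which is the equation written in the original proof. Here $\Box_{-\vp}=h^{2}e^{\vp/h}\Box e^{-\vp/h}$, consistent with \eqref{Conjugated Box}. Note that $\Lc_{q^{*}}\vec{B}_{d}\in \Hbf^{1}(Q)$ since $\vec{B}_{d}\in C^{\infty}$ and each $q_{ij}\in W^{1,\infty}(Q)$.

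Next I will apply the right inverse $\Box_{-\vp}^{*}$ of Lemma \ref{Existence lemma decaying} and recast the problem as a fixed-point equation $\vec{R}_{d}=\Fc(\vec{R}_{d})$ with
\begin{equation*}
\Fc(\vec{w}):=\Box_{-\vp}^{*}\Big(-h\,\Lc_{q^{*}}\vec{B}_{d}-h^{2}q^{*}\vec{w}\Big),\qquad \vec{w}\in \Hbf^{1}(Q).
\end{equation*}
The mapping $\Fc$ is well defined on $\Hbf^{1}(Q)$ by item (3) of the lemma together with $q\in W^{1,\infty}(Q)$, and by linearity plus items (2)--(3) I obtain
\begin{equation*}
\lVert \Fc(\vec{w}_{1})-\Fc(\vec{w}_{2})\rVert_{\Hbf^{1}(Q)}=h^{2}\lVert \Box_{-\vp}^{*}\bigl(q^{*}(\vec{w}_{1}-\vec{w}_{2})\bigr)\rVert_{\Hbf^{1}(Q)}\leq Ch^{2}\lVert \vec{w}_{1}-\vec{w}_{2}\rVert_{\Hbf^{1}(Q)}.
\end{equation*}
Choosing $h_{0}>0$ so that $Ch_{0}^{2}<1$, the Banach fixed-point theorem yields a unique $\vec{R}_{d}\in \Hbf^{1}(Q)$ satisfying $\Fc(\vec{R}_{d})=\vec{R}_{d}$. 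Then property (1) of Lemma \ref{Existence lemma decaying} ensures that $\vec{R}_{d}$ is a solution of $\Box_{-\vp}\vec{R}_{d}=-h\Lc_{q^{*}}\vec{B}_{d}-h^{2}q^{*}\vec{R}_{d}$ in the classical sense, and a second application of item (3) to the right-hand side gives the promoted regularity $\vec{R}_{d}\in \Hbf^{2}(Q)$.

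For the estimate \eqref{estimate for Remainder term1}, I will bound $\lVert \vec{R}_{d}\rVert_{\Lbf(Q)}$ using item (2), getting
\begin{equation*}
\lVert \vec{R}_{d}\rVert_{\Lbf(Q)}\leq C\Big(h\lVert \Lc_{q^{*}}\vec{B}_{d}\rVert_{\Lbf(Q)}+h^{2}\lVert q^{*}\rVert_{L^{\infty}}\lVert \vec{R}_{d}\rVert_{\Lbf(Q)}\Big),
\end{equation*}
and absorbing the second term on the left for $h$ small gives $\lVert \vec{R}_{d}\rVert_{\Lbf(Q)}\leq C$ uniformly, which is \eqref{estimate for Remainder term1}. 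The main obstacle is of course verifying that Lemma \ref{Existence lemma decaying}'s inverse is available in the anisotropic vector-valued setting with the boundary/initial profile we need, but since the lemma is quoted verbatim from \cite{Kian} and applied component-wise, this is more of a bookkeeping issue than a technical hurdle; the substantive point is the vanishing of the principal symbol of the conjugated wave operator along the $(1,-\omega)$ direction that lets $\vec{B}_{d}$ satisfy the transport equation trivially.
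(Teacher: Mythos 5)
Your proposal is correct and follows essentially the same route as the paper: derive the equation $\Box_{-\vp}\vec{R}_{d}=-h\Lc_{q^{*}}\vec{B}_{d}-h^{2}q^{*}\vec{R}_{d}$, solve it by a contraction argument built on $\Box_{-\vp}^{*}$ from Lemma \ref{Existence lemma decaying}, and read off the $\Hbf^{2}$ regularity and the $\Lbf$ bound. You in fact supply two details the paper leaves implicit, namely that $\zeta\in(1,-\omega)^{\perp}$ kills the $O(1/h)$ transport term acting on $\vec{B}_{d}$, and the explicit absorption argument yielding \eqref{estimate for Remainder term1}.
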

	Next in the following proposition we construct the exponential growing solution to $\Lc_{q}\vec{v}=0$.
	\begin{proposition}\label{Growing solution}
		Let $q$ and $\varphi$ be as in Theorem \ref{Boundary Carleman estimate}. Then, there exists an $h_{0}>0$ such that for all $0<h\leq h_{0}$, we can find $\vec{v}_g \in \Hbf^{2}(Q)$ satisfying $\Lc_{q}\vec{v}_g=0$ of the form 
		\begin{equation}\label{Go growing}
		\vec{v}_{g}(t,x)= e^{\frac{\vp}{h}}\lb \vec{B}_{g}(t,x) + h\vec{R}_{g}(t,x;h)\rb,
		\end{equation}
	$\vec{B}_{g}(t,x):=\vec{K}_{2}$ is a constant $n$-vector and $\vec{R}_{g}\in \Hbf^{2}(Q)$ satisfies
		\begin{equation}\label{estimate for Remainder term2}
		\lVert \vec{R}_{g}\rVert_{\Lbf(Q)}\leq C .
		\end{equation}
	\begin{proof}
		Proof follows by using the similar arguments as used in proving Proposition \ref{Decaying solution}. 
		\end{proof}
	\end{proposition}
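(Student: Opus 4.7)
The plan is to mirror the proof of Proposition \ref{Decaying solution} line by line, replacing $-\varphi/h$ with $+\varphi/h$ and the conjugated operator $\Box_{-\varphi}$ with $\Box_{\varphi}$. Since the principal amplitude $\vec{B}_g=\vec{K}_2$ is now a \emph{constant} vector, the only substantive check is that this choice still kills the transport term that arises from the conjugation; everything else is a contraction/right-inverse argument identical to the decaying case.

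Substituting the ansatz $\vec{v}_g = e^{\varphi/h}(\vec{K}_2 + h\vec{R}_g)$ into $\Lc_q\vec{v}_g=0$ and multiplying by $h^2 e^{-\varphi/h}$, I obtain
\[
\Box_\varphi(\vec{K}_2 + h\vec{R}_g) + h^2 q(\vec{K}_2 + h\vec{R}_g) = 0,
\]
where $\Box_\varphi = h^2 e^{-\varphi/h}\Box e^{\varphi/h} = h^2\Box + 2h(\partial_t-\omega\cdot\nabla_x)$. Because $\vec{K}_2$ is constant, both $\Box\vec{K}_2=0$ and $(\partial_t-\omega\cdot\nabla_x)\vec{K}_2=0$; consequently $\Box_\varphi\vec{K}_2 = 0$. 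The equation therefore reduces to
\[
\Box_\varphi \vec{R}_g = -h\, q\vec{K}_2 - h^2 q\vec{R}_g,
\]
which is exactly the same shape as the equation for $\vec{R}_d$ in Proposition \ref{Decaying solution}, but with the sign in the conjugated operator flipped.

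Next, I apply Lemma \ref{Existence lemma decaying} with the $+\varphi$ sign to obtain a bounded right inverse $\Box_\varphi^{*}:H^1(Q)\to H^2(Q)$ satisfying $\Box_\varphi\Box_\varphi^{*}f=f$ together with the $L^2$-boundedness estimate $\lVert\Box_\varphi^{*}\rVert_{\mathcal{B}(L^2(Q))}\leq C$. Then I define the map $\mathcal{F}:\mathbf{H}^1(Q)\to\mathbf{H}^1(Q)$ by
\[
\mathcal{F}(\vec{w}) := \Box_\varphi^{*}\bigl(-h\, q\vec{K}_2 - h^2 q\vec{w}\bigr).
\]
Using $q\in W^{1,\infty}(Q)$ together with the mapping properties of $\Box_\varphi^{*}$, the estimate
\[
\lVert \mathcal{F}(\vec{w}_1)-\mathcal{F}(\vec{w}_2)\rVert_{\mathbf{H}^1(Q)} = h^{2}\bigl\lVert \Box_\varphi^{*}\bigl(q(\vec{w}_1-\vec{w}_2)\bigr)\bigr\rVert_{\mathbf{H}^1(Q)}\leq C h^2\,\lVert \vec{w}_1-\vec{w}_2\rVert_{\mathbf{H}^1(Q)}
\]
holds, so for $h$ small enough $\mathcal{F}$ is a contraction on $\mathbf{H}^1(Q)$. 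The Banach fixed-point theorem yields $\vec{R}_g\in\mathbf{H}^1(Q)$ with $\mathcal{F}(\vec{R}_g)=\vec{R}_g$, and feeding this back into the defining equation together with the $H^1\to H^2$ boundedness of $\Box_\varphi^{*}$ upgrades the regularity to $\vec{R}_g\in\mathbf{H}^2(Q)$. The required bound $\lVert\vec{R}_g\rVert_{\mathbf{L}^2(Q)}\leq C$ is then immediate from applying the $L^2$-boundedness of $\Box_\varphi^{*}$ to the right-hand side $-hq\vec{K}_2-h^2 q\vec{R}_g$, whose $L^2$ norm is $O(h)$.

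I do not anticipate any real obstacle here: the single point where the growing case could diverge from the decaying one is the vanishing of $\Box_\varphi\vec{B}_g$, and this is handled for free by choosing $\vec{B}_g$ constant (so no analog of the Fourier frequency $\zeta\in(1,-\omega)^\perp$ is needed). Everything else is a transcription of the decaying argument with the sign of $\varphi$ reversed.
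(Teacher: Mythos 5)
Your proposal is correct and is exactly the argument the paper intends: the paper's proof of Proposition \ref{Growing solution} is simply a pointer to Proposition \ref{Decaying solution}, and your transcription (flipping the sign of $\vp$, noting that $\Box_{\vp}\vec{K}_{2}=0$ because $\vec{K}_{2}$ is constant, reducing to $\Box_{\vp}\vec{R}_{g}=-hq\vec{K}_{2}-h^{2}q\vec{R}_{g}$, and running the same right-inverse/contraction argument via Lemma \ref{Existence lemma decaying}) is precisely that adaptation. No gaps.
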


	\section{Recovery of $q$}\label{proof main result}
	In this section, we prove the  main Theorem \ref{Main Theorem} of this article.  The proof is based on deriving an integral identity followed by using the Carleman estimate and   geometric optic solutions constructed in \S \ref{GO solutions}, we conclude the proof of our main result.  
To derive the integral identity, let us consider $\vec{u}^{(j)}$ be the solutions to the following initial boundary value problems with matrix valued potential $q^{(j)}$ for $j=1,2$.

%	In this section, we derive an integral identity involving the coefficients  $q^{(1)}(t,x)$ and $q^{(2)}(t,x)$ using the geometric optics solutions constructed in the previous section. 

	\begin{align}\label{Equation for ui}
	\begin{aligned}
	\begin{cases}
	&\Lc_{q^{(j)}}\vec{u}^{(j)}(t,x)=0,\ (t,x)\in Q\\
	&\vec{u}^{(j)}(0,x)=\vec{\phi}(x),\ \partial_{t}\vec{u}^{(j)}(0,x)=\vec{\psi}(x), \ x\in\Omega\\
	&\vec{u}^{(j)}(t,x)=\vec{f}(t,x),\ (t,x)\in \Sigma.
	\end{cases}
	\end{aligned}
	\end{align} 
Also denote 
	\begin{align}\label{Difference defn}
	\begin{aligned}
	&\vec{u}(t,x):=\vec{u}^{(1)}(t,x)-\vec{u}^{(2)}(t,x)\\
	&q(t,x):=q^{(2)}(t,x)-q^{(1)}(t,x).
	\end{aligned}
	\end{align}
	Then $\vec{u}$ will satisfies the following initial boundary value problem: 
	\begin{align}\label{Equation for u linear}
	\begin{aligned}
	\begin{cases}
	&\Lc_{q^{(1)}}\vec{u}(t,x)
	=q(t,x)\vec{u}^{(2)}(t,x),\ (t,x)\in Q \\
	&\vec{u}(0,x)=\partial_{t}\vec{u}(0,x)=\vec{0},\ x\in\Omega\\
	&\vec{u}(t,x)=\vec{0},\ (t,x)\in \Sigma
	\end{cases}
	\end{aligned}
	\end{align}
	Let $\vec{v}(t,x)$ of the form given by \eqref{Go decaying} be the solution to following equation 
	\begin{align}\label{adjoint equation for u1}
	\Lc^{*}_{{q^{(1)}}}\vec{v}(t,x)
	=0\  \text{in}\ Q.
	\end{align}
	Also let $\vec{u}^{(2)}$ of the form given by  \eqref{Go growing} be solution to the following equation
	\begin{align}\label{Equation for u2}
	\begin{aligned}
	\begin{cases}
	&\Lc_{q^{(2)}}\vec{u}^{(2)}(t,x)=0,\ (t,x)\in Q\\
	&\vec{u}^{(2)}(0,x)=\vec{\phi}(x),\ \partial_{t}\vec{u}^{(2)}(0,x)=\vec{\psi}(x), \ x\in\Omega\\
	&\vec{u}^{(2)}(t,x)=\vec{f}(t,x),\ (t,x)\in \Sigma.
	\end{cases}
	\end{aligned}
	\end{align} 
	Using Theorem \ref{Exitence uniqueness theorem}, we have $\vec{u}\in \Hbf^{1}(Q)$ and $\partial_{\nu}\vec{u}\in \Lbf(\Sigma)$. 
Multiply  \eqref{Equation for u linear} by $\overline{\vec{v}(t,x)}\in \Hbf^{1}(Q)$ solution to \eqref{adjoint equation for u1} and integrate over $Q$. Now using integration by parts and taking into account the following: $\vec{u}|_{\Sigma}=\vec{0}$, $\vec{u}(T,x)=\vec{0}$, $\partial_{\nu}\vec{u}|_{G}=\vec{0}$, $\vec{u}|_{t=0}=\PD_{t}\vec{u}|_{t=0}=\vec{0}$ and $\Lc^{*}_{{q^{(1)}}}\vec{v}(t,x)=\vec{0}$ , we get 
	%\begin{align*}
	%	\int\limits_{Q}\Lc_{q^{(1)}}\vec{u}(t,x)\cdot \overline{\vec{v}(t,x)} \D x \D t&=\int\limits_{\Omega}\partial_{t}\vec{u}(T,x)\cd ot \overline{\vec{v}(T,x)}\D x-\int\limits_{\Sigma\setminus{G}}\partial_{\nu}\vec{u}(t,x)\cdot\overline{\vec{v}(t,x)}\D S_{x}\D t.
	%	\end{align*}
	%	Now using Equation \eqref{Equation for u linear},
	%	we get
	\begin{align}\label{Final integral identity}
	\int\limits_{Q}q(t,x)\vec{u}^{(2)}(t,x)\cdot\overline{\vec{v}(t,x)}\D x \D t&=\int\limits_{\Omega}\partial_{t}\vec{u}(T,x) \cdot\overline{\vec{v}(T,x)}\D x-\int\limits_{\Sigma\setminus{G}}\partial_{\nu}\vec{u}(t,x)\cdot\overline{\vec{v}(t,x)}\D S_{x}\D t.
	\end{align}
	\begin{lemma}
		Let  $\vec{u}^{(i)}$ for $i=1,2$ solutions to \eqref{Equation for ui} with $\vec{u}^{(2)}$ of the form \eqref{Go growing}. Let $\vec{u}(t,x)=\vec{u}^{(1)}(t,x)-\vec{u}^{(2)}(t,x)$, and $\vec{v}$ be of the form \eqref{Go decaying}. Then 
		\begin{equation}\label{first remainder}
		h\int\limits_{\Omega} \PD_{t}\vec{u}(T,x)\cdot \overline{\vec{v}(T,x)} \D x \to 0 \mbox{ as } h\to 0^{+}.
		\end{equation}
		\begin{equation}\label{second remainder}
		h\int\limits_{\Sigma\setminus{G}}\partial_{\nu}\vec{u}(t,x)\cdot\overline{\vec{v}(t,x)}\D S_{x}\D t \to 0 \mbox{ as } h\to 0^{+}.
		\end{equation}
	\end{lemma}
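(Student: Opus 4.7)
The plan is to derive both limits from one application of the Carleman estimate of Theorem \ref{Boundary Carleman estimate} to the difference $\vec{u}=\vec{u}^{(1)}-\vec{u}^{(2)}$. First observe that $\vec{u}$ solves \eqref{Equation for u linear}, so it has the trivial Cauchy data at $t=0$ and vanishes on $\Sigma$, and in addition the hypothesis \eqref{Equality of input-output operator} forces $\vec{u}(T,\cdot)=\vec{0}$ (whence also $\nabla_{x}\vec{u}(T,\cdot)=\vec{0}$) and $\PD_{\nu}\vec{u}|_{G}=\vec{0}$. Choose $\omega\in \Sb^{n-1}$ in a small neighbourhood of $\omega_{0}$ so that $\PD\Omega_{-,\omega}\subset G'$; then $\Sigma_{-,\omega}\subset G$ and hence the last boundary integrand on the right of \eqref{Boundary carleman estimate} also vanishes. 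After a standard density/regularization argument allowing one to apply Theorem \ref{Boundary Carleman estimate} to $\vec{u}\in\Hbf^{1}(Q)$, the Carleman estimate collapses to
\begin{equation*}
h\,\bigl\lVert e^{-\vp(T,\cdot)/h}\PD_{t}\vec{u}(T,\cdot)\bigr\rVert_{\Lbf(\Omega)}^{2}
+h\,\bigl\lVert\sqrt{\PD_{\nu}\vp}\,e^{-\vp/h}\PD_{\nu}\vec{u}\bigr\rVert_{\Lbf(\Sigma_{+,\omega})}^{2}
\leq C\,\lVert he^{-\vp/h}\Lc_{q^{(1)}}\vec{u}\rVert_{\Lbf(Q)}^{2}.
\end{equation*}

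Next, use $\Lc_{q^{(1)}}\vec{u}=q\,\vec{u}^{(2)}$ together with the GO representation $\vec{u}^{(2)}=e^{\vp/h}(\vec{B}_{g}+h\vec{R}_{g})$ from Proposition \ref{Growing solution}. Since $e^{-\vp/h}\vec{u}^{(2)}=\vec{B}_{g}+h\vec{R}_{g}$ is bounded in $\Lbf(Q)$ uniformly in $h$ (using $q\in W^{1,\infty}(Q)$ and \eqref{estimate for Remainder term2}), we deduce
\begin{equation*}
\bigl\lVert e^{-\vp(T,\cdot)/h}\PD_{t}\vec{u}(T,\cdot)\bigr\rVert_{\Lbf(\Omega)}\leq Ch^{1/2},\qquad \bigl\lVert\sqrt{\PD_{\nu}\vp}\,e^{-\vp/h}\PD_{\nu}\vec{u}\bigr\rVert_{\Lbf(\Sigma_{+,\omega})}\leq Ch^{1/2}.
\end{equation*}

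With these two weighted estimates in hand, both limits reduce to a Cauchy--Schwarz pairing against the decaying GO solution $\vec{v}=e^{-\vp/h}(\vec{B}_{d}+h\vec{R}_{d})$ of Proposition \ref{Decaying solution}. For \eqref{first remainder}, write the integrand as $\bigl(e^{-\vp(T,x)/h}\PD_{t}\vec{u}(T,x)\bigr)\cdot\overline{(\vec{B}_{d}+h\vec{R}_{d})(T,x)}$ and bound it by $h\cdot Ch^{1/2}\cdot\lVert\vec{B}_{d}(T,\cdot)+h\vec{R}_{d}(T,\cdot)\rVert_{\Lbf(\Omega)}=O(h^{3/2})$; the $\Lbf(\Omega)$-norm of the trace is controlled by the $\Hbf^{2}(Q)$-regularity of $\vec{R}_{d}$ given in Proposition \ref{Decaying solution}. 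For \eqref{second remainder}, note that our choice of $\omega$ gives $\Sigma\setminus G\subset\Sigma_{+,\omega}$ with $\PD_{\nu}\vp=\omega\cdot\nu\geq c>0$ there, so dividing and multiplying by $\sqrt{\PD_{\nu}\vp}$ yields
\begin{equation*}
\Bigl|h\int_{\Sigma\setminus G}\PD_{\nu}\vec{u}\cdot\overline{\vec{v}}\,\D S_{x}\D t\Bigr|\leq h\,\bigl\lVert\sqrt{\PD_{\nu}\vp}\,e^{-\vp/h}\PD_{\nu}\vec{u}\bigr\rVert_{\Lbf(\Sigma_{+,\omega})}\cdot\bigl\lVert(\PD_{\nu}\vp)^{-1/2}(\vec{B}_{d}+h\vec{R}_{d})\bigr\rVert_{\Lbf(\Sigma\setminus G)}=O(h^{3/2}).
\end{equation*}

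The main obstacle is really bookkeeping rather than conceptual: I must (i) justify applying the Carleman estimate to the merely $\Hbf^{1}$ solution $\vec{u}$ by approximation, making sure the trace $\PD_{\nu}\vec{u}\in\Lbf(\Sigma)$ supplied by Theorem \ref{Exitence uniqueness theorem} passes to the limit, and (ii) verify that $\omega_{0}$ can be replaced by a nearby $\omega$ such that $\Sigma_{-,\omega}\subset G$ and $\PD_{\nu}\vp$ is uniformly bounded below on $\Sigma\setminus G$; once those geometric and functional-analytic points are in place the rest is just Cauchy--Schwarz against the explicit GO profiles.
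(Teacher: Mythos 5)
Your proposal is correct and follows essentially the same route as the paper: apply the boundary Carleman estimate to $\vec{u}$ (whose final-time data, lateral trace, and normal derivative on $G\supset\Sigma_{-,\omega}$ all vanish), bound the right-hand side by $\lVert he^{-\vp/h}q\,\vec{u}^{(2)}\rVert_{\Lbf(Q)}^{2}=O(h^{2})$ via the growing GO form, and then finish both limits by Cauchy--Schwarz against the decaying GO profile, using $\PD_{\nu}\vp\geq\ve>0$ on $\Sigma\setminus G$ for the lateral term. You are in fact slightly more careful than the paper in flagging the density argument needed to apply the $\textbf{C}^{2}$ Carleman estimate to the $\Hbf^{1}$ solution and the geometric choice of $\omega$ near $\omega_{0}$.
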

	\begin{proof}
		
		Using  \eqref{Go decaying}, \eqref{estimate for Remainder term1} and Cauchy-Schwartz inequality, we get 
		\begin{align*}
		&\left|h\int\limits_{\Omega}\partial_{t}\vec{u}(T,x)\cdot\overline{\vec{v}(T,x)}\D x\right|\leq \int\limits_{\Omega}h\left|\partial_{t}\vec{u}(T,x)\cdot e^{-\frac{\vp(T,x)}{h}}\overline{\left(\vec{B}_{d}(T,x)+h \vec{R}_{d}(T,x)\right)}\right|\D x \\
		&\leq C\left(\int\limits_{\Omega}h^{2}\left|\partial_{t}\vec{u}(T,x)e^{-\frac{\vp(T,x)}{h}}\right|^{2}\D x\right)^{\frac{1}{2}}\left(\int\limits_{\Omega}\left|e^{-i\xi\cdot(T,x)}\vec{K}_{1}+h\overline{\vec{R}_{d}(T,x)}\right|^{2}\D x\right)^{\frac{1}{2}}\\
		&\leq C\left(\int\limits_{\Omega}h^{2}\left|\partial_{t}\vec{u}(T,x)e^{-\frac{\vp(T,x)}{h}}\right|^{2}\D x\right)^{\frac{1}{2}}\left(1+\lVert h\vec{R}_{d}(T,\cdot)\rVert^{2}_{\Lbf(\Omega)}\right)^{\frac{1}{2}}\\
		&\leq C \left(\int\limits_{\Omega}h^{2}\left|\partial_{t}\vec{u}(T,x)e^{-\frac{\vp(T,x)}{h}}\right|^{2}\D x\right)^{\frac{1}{2}}.
		\end{align*}
		Now using the boundary Carleman estimate \eqref{Boundary Carleman estimate}, 	we get, 
		\begin{align*}
		h\int\limits_{\Omega}\left|\partial_{t}\vec{u}(T,x)e^{-\frac{\vp(T,x)}{h}}\right|^{2}\D x\leq C\lVert he^{-\vp/h} \Lc_{q^{(1)}} \vec{u}\rVert_{\Lbf(Q)}^{2}=C\lVert he^{-\vp/h}q \vec{u}^{(2)}\lVert^{2}_{\Lbf(Q)}.
		\end{align*}
Substituting \eqref{Go growing} for $\vec{u}^{(2)}$, we get,
		\[
		h\int\limits_{\Omega} \PD_{t}\vec{u}(T,x)\cdot \overline{\vec{v}(T,x)} \D x \to 0 \mbox{ as } h\to 0^{+}.
		\]
		For $\ve>0$, define 
		\[
		\PD\Omega_{+,\ve,\omega}=\{x\in \PD \Omega: \nu(x) \cdot \omega > \ve\},\ \mbox{and}\ 
		\Sigma_{+,\ve,\omega}=(0,T)\times \PD \Omega_{+,\ve,\omega}.
		\]
	Next we prove \eqref{second remainder}.  Since $\Sigma\setminus G\subseteq \Sigma_{+,\ve,\omega}$ for all $\omega$ such that $|\omega-\omega_{0}|\leq \ve$, substituting $\vec{v}=\vec{v}_{d}$ from \eqref{Go decaying} in \eqref{second remainder} we have 
		\begin{align*}
		\begin{aligned}
		&\left|\int\limits_{\Sigma\setminus{G}}\partial_{\nu}\vec{u}(t,x)\cdot \overline{\vec{v}(t,x)}\D S_{x}\D t\right|\leq\int\limits_{\Sigma{+,\ve,\omega}}\left|\partial_{\nu}\vec{u}(t,x)\cdot e^{-\frac{\vp}{h}}\left(\vec{B}_{d}+h\vec{R}_{d}\right)(t,x)\right|\D S_{x}\D t\\
		&\leq C\left(1+\lVert h\vec{R}_{d}\rVert_{L^{2}(\Sigma)}^{2}\right)^{\frac{1}{2}}\left(\int\limits_{\Sigma{+,\ve,\omega}}\left|\partial_{\nu}\vec{u}(t,x)e^{-\frac{\vp}{h}}\right|^{2}\D S_{x}\D t\right)
		\end{aligned}
		\end{align*}
		with $C>0$ is independent of $h$ and this inequality holds for all $\omega $ such that $|\omega -\omega_{0}|\leq \ve$. Now using trace theorem, we have that  $\lVert \vec{R}_{d}\rVert_{\Lbf(\Sigma)}\leq C\lVert \vec{R}_{d}\rVert_{\Hbf^{1}(Q)}$.
	Using this, we get 
		\begin{align*}
		\left|\int\limits_{\Sigma\setminus{G}}\partial_{\nu}\vec{u}(t,x)\cdot \vec{v}(t,x)\D S_{x}\D t\right|\leq C\left(\int\limits_{\Sigma{+,\ve,\omega}}\left|\partial_{\nu}\vec{u}(t,x)e^{-\frac{\vp}{h}}\right|^{2}\D S_{x}\D t\right)^{\frac{1}{2}}.
		\end{align*}
		Now 
		\begin{align*}
		\int\limits_{\Sigma_{+},\ve,\omega}\left|\partial_{\nu}\vec{u}(t,x)e^{-\frac{\vp}{h}}\right|^{2}\D S_{x}\D t& =\frac{1}{\ve}\int \limits_{\Sigma_{+},\ve,\omega}\ve \left|\partial_{\nu}\vec{u}(t,x)e^{-\frac{\vp}{h}}\right|^{2}\D S_{x}\D t\\
		&\leq \frac{1}{\ve}\int\limits_{\Sigma_{+},\ve,\omega} \PD_{\nu}\vp \left|\partial_{\nu}\vec{u}(t,x)e^{-\frac{\vp}{h}}\right|^{2}\D S_{x}\D t.
		\end{align*}
		Using  \eqref{Boundary carleman estimate}, we have 
		\[
		\frac{h}{\ve}\int\limits_{\Sigma_{+},\ve,\omega} \PD_{\nu}\vp \left|\partial_{\nu}\vec{u}(t,x)e^{-\frac{\vp}{h}}\right|^{2}\D S_{x}\D t \leq C\lVert he^{-\vp/h} \Lc_{q^{(1)}} \vec{u}\rVert_{\Lbf(Q)}^{2}.
		\]
	Now proceeding as before, we get
		\[
		h\int\limits_{\Sigma\setminus{G}}\partial_{\nu}\vec{u}(t,x)\cdot\overline{\vec{v}(t,x)}\D S_{x}\D t \to 0 \mbox{ as } h\to 0^{+}.
		\]
	\end{proof} 
\noindent	Substituting \eqref{Go growing} for $\vec{u}^{(2)}$ and \eqref{Go decaying} for $\vec{v}$ in \eqref{Final integral identity} and using \eqref{first remainder} and \eqref{second remainder}, we get
%	\[
%	\lim_{h\rightarrow 0}	\int \limits_{Q} q(t,x) \vec{u}^{(2)}(t,x)\cdot\vec{v}(t,x) \D x \D t=0,
%	\]  
%	we get,
	\[
	\int\limits_{\Rb^{1+n}} e^{-\I \xi \cdot (t,x)} q(t,x)\vec{K}_{1}\cdot \vec{K}_{2}\D x \D t=0, \mbox{ for} \ \xi \in (1,-\omega)^{\perp}, \ \mbox{for constant vectors} \ \vec{K}_{1}, \ \vec{K}_{2} \mbox{ and } \omega \mbox{ near } \omega_{0}.
	\]

	The set of all $\xi$ such that $\xi\in (1,-\omega)^{\perp}$ for $\omega$ near $\omega_{0}$ forms an open cone and since $q\in W^{1,\infty}(Q)$ has compact support therefore using the Paley-Wiener theorem we conclude that $q(t,x)\vec{K_{1}}\cdot \vec{K}_{2}=0$ for all $(t,x)\in Q$ and arbitrary constant vector $\vec{K}_{1}$ and $\vec{K}_{2}$. Thus, we have $q_{1}(t,x)=q_{2}(t,x)$. This completes the proof of Theorem \ref{Main Theorem}.

	\section*{Acknowledgments}
The work of second author is supported by NSAF grant (No. U1930402).

\end{document}